\chardef\forshowkeys=0
\chardef\showllabel=0
\chardef\refcheck=0
\chardef\sketches=0
\chardef\showfont=0         
\author[M.S.~Ayd{\i}n]{Mustafa Sencer Ayd{\i}n}
\address{Department of Mathematics, University of Southern California, Los Angeles, CA 90089}
\email{maydin@usc.edu}
\title{Euler Equations in Sobolev conormal spaces}
\author[I.~Kukavica]{Igor Kukavica}
\address{Department of Mathematics\\
 University of Southern California\\
 Los Angeles, CA 90089}
\email{kukavica@usc.edu}
\begin{document}
 
\def\inprogress{{\colg IN PROGRESS} }
\def\bnew{\colr {}}
\def\enew{\colb {}}
\def\bold{\colu {}}
\def\eold{\colb{}}
\def\YY{X}
\def\OO{\mathcal O}
\def\SS{\mathbb S}
\def\CC{\mathbb C}
\def\RR{\mathbb R}
\def\TT{\mathbb T}
\def\ZZ{\mathbb Z}
\def\HH{\mathbb H}
\def\RSZ{\mathcal R}
\def\LL{\mathcal L}
\def\SL{\LL^1}
\def\ZL{\LL^\infty}
\def\GG{\mathcal G}
\def\tt{\langle t\rangle}
\def\erf{\mathrm{Erf}}
\def\mgt#1{\textcolor{magenta}{#1}}
\def\ff{\rho}
\def\gg{G}
\def\sqrtnu{\sqrt{\nu}}
\def\ww{w}
\def\ft#1{#1_\xi}
\def\ges{\gtrsim}
\renewcommand*{\Re}{\ensuremath{\mathrm{{\mathbb R}e\,}}}
\renewcommand*{\Im}{\ensuremath{\mathrm{{\mathbb I}m\,}}}

\ifnum\showllabel=1
\def\llabel#1{\marginnote{\color{gray}\rm(#1)}[-0.0cm]\notag}
\else
\def\llabel#1{\notag}
\fi

\newcommand{\norm}[1]{\left\|#1\right\|}
\newcommand{\nnorm}[1]{\lVert #1\rVert}
\newcommand{\abs}[1]{\left|#1\right|}
\newcommand{\NORM}[1]{|\!|\!| #1|\!|\!|}
\theoremstyle{plain}

\newtheorem{Theorem}{Theorem}[section]
\newtheorem{Proposition}[Theorem]{Proposition}
\newtheorem{Lemma}[Theorem]{Lemma}
\newtheorem{Corollary}[Theorem]{Corollary}
\newtheorem{Assumption}[Theorem]{Assumption}
\newtheorem{Definition}[Theorem]{Definition}
\theoremstyle{definition}
\newtheorem{Remark}[Theorem]{Remark}
\newtheorem{Notation}[Theorem]{Notation}
\newtheorem{Example}[Theorem]{Example}
\newtheorem{Exercise}[Theorem]{Exercise}
\def\theequation{\thesection.\arabic{equation}}
\numberwithin{equation}{section}
\definecolor{mygray}{rgb}{.6,.6,.6}
\definecolor{myblue}{rgb}{9, 0, 1}
\definecolor{colorforkeys}{rgb}{1.0,0.0,0.0}
\newlength\mytemplen
\newsavebox\mytempbox
\makeatletter
\newcommand\mybluebox{%
  \@ifnextchar[
  {\@mybluebox}%
  {\@mybluebox[0pt]}}
\def\@mybluebox[#1]{%
  \@ifnextchar[
  {\@@mybluebox[#1]}%
  {\@@mybluebox[#1][0pt]}}
\def\@@mybluebox[#1][#2]#3{
  \sbox\mytempbox{#3}%
  \mytemplen\ht\mytempbox
  \advance\mytemplen #1\relax
  \ht\mytempbox\mytemplen
  \mytemplen\dp\mytempbox
  \advance\mytemplen #2\relax
  \dp\mytempbox\mytemplen
  \colorbox{myblue}{\hspace{1em}\usebox{\mytempbox}\hspace{1em}}}
 \makeatother
\def\aand{{\indeq}\text{and}{\indeq}}
\def\XX{{\mathcal X}}
\def\XXT{{\mathcal X}_T}
\def\XXTzero{{\mathcal X}_{T_0}}
\def\XXi{{\mathcal X}_\infty}
\def\YY{{\mathcal Y}}
\def\YYT{{\mathcal Y}_T}
\def\YYTzero{{\mathcal Y}_{T_0}}
\def\YYi{{\mathcal Y}_\infty}
\def\cc{\text{c}}
\def\rr{r}
\def\weaks{\text{\,\,\,\,\,\,weakly-* in }}
\def\inn{\text{\,\,\,\,\,\,in }}
\def\cof{\mathop{\rm cof\,}\nolimits}
\def\Dn{\frac{\partial}{\partial N}}
\def\Dnn#1{\frac{\partial #1}{\partial N}}
\def\tdb{\tilde{b}}
\def\tda{b}
\def\qqq{u}
\def\lat{\Delta_2}
\def\biglinem{\vskip0.5truecm\par==========================\par\vskip0.5truecm}
\def\inon#1{\hbox{\ \ \ \ \ \ \ }\hbox{#1}}                
\def\onon#1{\inon{on~$#1$}}
\def\inin#1{\inon{in~$#1$}}
\def\FF{F}
\def\andand{\text{\indeq and\indeq}}
\def\ww{w(y)}
\def\ll{{\color{red}\ell}}
\def\ee{\epsilon_0}
\def\startnewsection#1#2{ \section{#1}\label{#2}\setcounter{equation}{0}}   
\def\loc{\text{loc}}
\def\nnewpage{ }
\def\sgn{\mathop{\rm sgn\,}\nolimits}    
\def\Tr{\mathop{\rm Tr}\nolimits}    
\def\div{\mathop{\rm div}\nolimits}
\def\curl{\mathop{\rm curl}\nolimits}
\def\dist{\mathop{\rm dist}\nolimits}  
\def\supp{\mathop{\rm supp}\nolimits}
\def\indeq{\quad{}}           
\def\period{.}                       
\def\semicolon{\,;}                  
\def\colr{\color{red}}
\def\colrr{\color{black}}
\def\colb{\color{black}}
\def\coly{\color{lightgray}}
\definecolor{colorgggg}{rgb}{0.1,0.5,0.3}
\definecolor{colorllll}{rgb}{0.0,0.7,0.0}
\definecolor{colorhhhh}{rgb}{0.3,0.75,0.4}
\definecolor{colorpppp}{rgb}{0.7,0.0,0.2}
\definecolor{coloroooo}{rgb}{0.45,0.0,0.0}
\definecolor{colorqqqq}{rgb}{0.1,0.7,0}
\def\colg{\color{colorgggg}}
\def\collg{\color{colorllll}}
\def\coleo{\color{colorpppp}}
\def\cole{\color{black}}
\def\colu{\color{blue}}
\def\colc{\color{colorhhhh}}
\def\colW{\colb}   
\definecolor{coloraaaa}{rgb}{0.6,0.6,0.6}
\def\colw{\color{coloraaaa}}
\def\comma{ {\rm ,\qquad{}} }            
\def\commaone{ {\rm ,\quad{}} }          
\def\lec{\lesssim}
\def\nts#1{{\color{red}\hbox{\bf ~#1~}}} 
\def\ntsf#1{\footnote{\color{colorgggg}\hbox{#1}}}
\def\ntsik#1{{\color{purple}\hbox{\bf ~#1~}}} 
\def\blackdot{{\color{red}{\hskip-.0truecm\rule[-1mm]{4mm}{4mm}\hskip.2truecm}}\hskip-.3truecm}
\def\bluedot{{\color{blue}{\hskip-.0truecm\rule[-1mm]{4mm}{4mm}\hskip.2truecm}}\hskip-.3truecm}
\def\purpledot{{\color{colorpppp}{\hskip-.0truecm\rule[-1mm]{4mm}{4mm}\hskip.2truecm}}\hskip-.3truecm}
\def\greendot{{\color{colorgggg}{\hskip-.0truecm\rule[-1mm]{4mm}{4mm}\hskip.2truecm}}\hskip-.3truecm}
\def\cyandot{{\color{cyan}{\hskip-.0truecm\rule[-1mm]{4mm}{4mm}\hskip.2truecm}}\hskip-.3truecm}
\def\reddot{{\color{red}{\hskip-.0truecm\rule[-1mm]{4mm}{4mm}\hskip.2truecm}}\hskip-.3truecm}
\def\gdot{\greendot}
\def\tdot{\gdot}
\def\bdot{\bluedot}
\def\ydot{\cyandot}
\def\rdot{\reddot}
\def\fractext#1#2{{#1}/{#2}}
\def\ii{\hat\imath}
\def\fei#1{\textcolor{blue}{#1}}
\def\vlad#1{\textcolor{cyan}{#1}}
\def\igor#1{\text{{\textcolor{colorqqqq}{#1}}}}
\def\igorf#1{\footnote{\text{{\textcolor{colorqqqq}{#1}}}}}
\newcommand{\p}{\partial}
\newcommand{\UE}{U^{\rm E}}
\newcommand{\PE}{P^{\rm E}}
\newcommand{\KP}{K_{\rm P}}
\newcommand{\uNS}{u^{\rm NS}}
\newcommand{\vNS}{v^{\rm NS}}
\newcommand{\pNS}{p^{\rm NS}}
\newcommand{\omegaNS}{\omega^{\rm NS}}
\newcommand{\uE}{u^{\rm E}}
\newcommand{\vE}{v^{\rm E}}
\newcommand{\pE}{p^{\rm E}}
\newcommand{\omegaE}{\omega^{\rm E}}
\newcommand{\ua}{u_{\rm   a}}
\newcommand{\va}{v_{\rm   a}}
\newcommand{\omegaa}{\omega_{\rm   a}}
\newcommand{\ue}{u_{\rm   e}}
\newcommand{\ve}{v_{\rm   e}}
\newcommand{\omegae}{\omega_{\rm e}}
\newcommand{\omegaeic}{\omega_{{\rm e}0}}
\newcommand{\ueic}{u_{{\rm   e}0}}
\newcommand{\veic}{v_{{\rm   e}0}}
\newcommand{\up}{u^{\rm P}}
\newcommand{\vp}{v^{\rm P}}
\newcommand{\tup}{{\tilde u}^{\rm P}}
\newcommand{\bvp}{{\bar v}^{\rm P}}
\newcommand{\omegap}{\omega^{\rm P}}
\newcommand{\tomegap}{\tilde \omega^{\rm P}}
\renewcommand{\up}{u^{\rm P}}
\renewcommand{\vp}{v^{\rm P}}
\renewcommand{\omegap}{\Omega^{\rm P}}
\renewcommand{\tomegap}{\omega^{\rm P}}
\def\hh{\text{h}}
\def\cco{\text{co}}

\begin{abstract}
  We consider the three-dimensional incompressible Euler equations in Sobolev conormal spaces and establish local-in-time existence and uniqueness in the half-space or channel. The initial data is Lipschitz having four square-integrable conormal derivatives and two bounded conormal derivatives. We do not impose any integrability or differentiability assumption for the normal derivative.
\end{abstract}

\maketitle

\startnewsection{Introduction}{sec.int}

We address the local-in-time existence and uniqueness, in Sobolev conormal spaces, of the three-dimensional incompressible Euler equations
 \begin{align}
  u_t + u\cdot \nabla u + \nabla p =0
  \comma \nabla \cdot u = 0
  \comma (x,t) \in \Omega \times (0,T)
  ,
  \llabel{euler}
 \end{align}
with the slip boundary condition
  \begin{align}
  u \cdot n = 0 
  \text{ on } \partial \Omega \times (0,T)
  \llabel{eulerb}
 \end{align}
where $n$ is the unit outward normal. 

One of the earliest results on local well-posedness is due to Lichtenstein in~\cite{L}, who considered the data in $C^{k,\alpha}$.  Kato, in~\cite{K1} and~\cite{K2}, addressed the problem in Sobolev spaces $H^m(\mathbb{R}^3)$, for $m\ge 3$ and $H^s(\mathbb{R}^3)$, for $s > \frac{5}{2}$, respectively.  Later, in~\cite{KP}, Kato and Ponce extended this result by considering the initial data in $W^{s,p}(\mathbb{R}^d)$, for $s > \frac{d}{p}+1$.  See~\cite {C1,C2,C3,CW,GL,GLY,PP} for other approaches in different functional settings.
  
A substantial amount of work also considered the ill-posedness of the three-dimensional incompressible Euler equations, which may refer to either the failure of existence, uniqueness, or the continuous dependence of the solution map on the initial data. In terms of the non-existence results, one of the earlier works is due to DiPerna and Lions, in~\cite{DL}, who considered initial data in $W^{1,p}$, for $p<\infty$.  Next, in \cite{BT}, Bardos and Titi, obtained a similar result for $u_0 \in C^\alpha$ where $\alpha <1$ and for the critical level Besov spaces, see~\cite{MY1}.  Subsequently, in~\cite{BL1}, Bourgain and Li studied this problem in $C^m$, proving non-existence. Simultaneously and employing a different approach, in~\cite{EM}, Elgindi and Masmoudi established a similar result in $C^k \cap L^2$. Then, Bourgain and Li, in~\cite{BL2}, extended the non-existence result to the critical level Sobolev spaces.  We note that these results provide constructions of initial data for which the Euler equations do not admit a solution keeping the same level of regularity.  For other ill-posedness results, such as non-uniqueness or discontinuous dependence on the initial data, see, for example,~\cite{BDIS, DS, HM, I, MY2, Sc,Sh}.
 
The previously mentioned results are concerned with
spatially isotropic functional spaces, i.e.,
derivatives in all directions are assumed to have the same level of regularity.
This assumption is natural when 
the domain under consideration, such as $\mathbb{R}^3$ or $\mathbb{T}^3$,
does not have a boundary. 
However, the presence of the boundary allows for a normal direction, and derivatives 
in this direction may behave differently in the vicinity of the boundary
than the tangential derivatives. From a mathematical point of view,
this asymmetry regarding the behavior of the derivatives is more
apparent when we consider the inviscid limit problem. 
The strong vanishing viscosity limit 
has been established by numerous works when the Navier-Stokes equations
  \begin{align}
   \partial_t u^\nu
     - \nu \Delta u^\nu
     + u^\nu \cdot \nabla u^\nu
     + \nabla p^\nu
     =0 
   \comma
   \nabla \cdot u^\nu = 0 \comma (x,t) \in \Omega \times (0,T)
   ,
   \llabel{NSE}
  \end{align}
are coupled with the Navier boundary conditions
  \begin{align}
   u^\nu\cdot n = 0 \comma 
   \left(\frac{1}{2}(\nabla u^\nu + \nabla^T u^\nu)\cdot n\right)_\tau
   =-\mu u^\nu_\tau
   \onon{\partial\Omega}
   ,
   \llabel{navierbdry}
  \end{align}
or other related Navier-type slip conditions---see~\cite{BC1,BC2,BC3,BC4,BS1,BS2,CQ,DN,GK,IS,IP,NP,TWZ,WXZ,X,XX,XZ1,XZ2}.  Intuitively, Navier-type conditions imply that the first-order normal derivative of $u^\nu$ evaluated at the boundary is tangential. Therefore, the resulting boundary layer is weaker compared to the case where we impose no-slip condition $u^\nu|_{\partial \Omega} = 0$.  Therefore, one may expect different behaviors for the normal and tangential derivatives near the boundary.
 
Masmoudi and Rousset considered the inviscid limit problem in the functional framework of Sobolev conormal spaces. In~\cite{MR1}, the authors assumed that
  \begin{align}
   (u,\nabla u)|_{t=0}  \in H^7_\cco \times (H^6_\cco \cap W^{1,\infty}_\cco)
   ,\label{mrassumption}
  \end{align}
(see the definitions~\eqref{con.def} and~\eqref{norm} below) and established the vanishing viscosity limit.  This shows that the Euler equations are well-posed over a functional setting where only one normal derivative is allowed.  In~\cite{AK}, for the case of the half-space, we extended their result to a weaker class of initial data given by
  \begin{align}
   (u,\nabla u)|_{t=0}  \in (H^4_\cco \cap W^{2,\infty}_{\cco}) \times (H^2_\cco \cap L^\infty)
   ,\label{ourassumption}
  \end{align}
and
  \begin{align}
   (u,\nabla u)|_{t=0}  \in (H^4_\cco \cap W^{2,\infty}_{\cco}) \times (H^1_\cco \cap L^\infty)
   ,
   \llabel{ourassumption2}
  \end{align}
assuming additionally that $\mu \ge 0$; see~\cite[Theorems~2.1--2.3]{AK}.  In the same work, we announced that by a different construction method, it is possible to establish the existence and uniqueness of solutions of the Euler equations under the assumption
  \begin{align}
   (u,\nabla u)|_{t=0}  \in (H^4_\cco \cap W^{2,\infty}_{\cco}) \times L^\infty
  ,
  \label{ourassumption3}
  \end{align}
which is the main result of the present paper.  We now discuss the essential differences between the assumptions in~\eqref{mrassumption} and~\eqref{ourassumption3}.  First, we note that to propagate $\nabla u|_{t=0} \in W^{1,\infty}_\cco$, as in~\eqref{mrassumption}, at least six and five conormal derivatives on $u$ and $\nabla u$, respectively, are required; see~\cite{MR2}.  The need to have $\nabla u \in W^{1,\infty}_\cco$ uniformly in positive time stems from the hyperbolic nature of the Euler equations and a derivative loss.  For example, in~\cite{MR1}, the authors estimate $\nabla u \in H^{m-1}_\cco$ as
  \begin{align}
   \frac{d}{dt}\Vert \nabla u\Vert_{H^{6}_\cco}^2
   \lec
   \Vert \nabla p\Vert_{H^{6}_\cco}\Vert \nabla u\Vert_{H^{6}_\cco}
   +(1+ \Vert \nabla u\Vert_{W^{1,\infty}_\cco})
   (\Vert \nabla u\Vert_{H^{6}_\cco}^2+\Vert u\Vert_{H^{7}_\cco}^2)
   ,\llabel{mr1}
  \end{align}
assuming that $\nu =0$.  Removing $\Vert \nabla u^\nu\Vert_{W^{1,\infty}_\cco}$ and keeping six conormal derivatives on~$u^\nu$ at the same time is challenging.  In the context of the inviscid limit problem, we bypass this difficulty by introducing two conormal derivatives of $\nabla u$; see~\cite[Proposition~4.1]{AK}, while for the Euler equations, we only have the boundedness assumption on~$\nabla u$.  Having neither integrability nor differentiability assumptions on $\nabla u$ introduces new challenges in estimating $u$ in~$H^4_\cco$.  In~\cite{MR1}, the authors estimate this term as
  \begin{align}
  \frac{d}{dt}\Vert u\Vert_{H^{m}_\cco}^2
  \lec
  \Vert \nabla p\Vert_{H^{m-1}_\cco}\Vert u\Vert_{H^{m}_\cco}
  +(1+ \Vert \nabla u\Vert_{L^\infty})
  (\Vert \nabla u\Vert_{H^{m-1}_\cco}^2+\Vert u\Vert_{H^{m}_\cco}^2)
  ,
  \llabel{mr2}
  \end{align}
for all integer~$m \ge 0$, assuming that $\nu = 0$. 
In our setting, this estimate requires that $\nabla u^\nu \in H^3_\cco$,
uniformly,
which is unrealistic. Therefore, for the conormal derivatives of $u$, we establish  
  \begin{align}
  \frac{d}{dt}\Vert u\Vert_{4}^2
  \lec
  \Vert \nabla p\Vert_{H^{3}_\cco}\Vert u\Vert_{H^{4}_\cco}
  +
  (1+ \Vert \nabla u\Vert_{L^\infty}+ \Vert u\Vert_{W^{2,\infty}_\cco})
  \Vert u\Vert_{H^{4}_\cco}^2
  ,
  \llabel{our2}
 \end{align}
 eliminating the term $\Vert \nabla u^\nu\Vert_{H^m_\cco}$;
 see Proposition~\ref{P.Con}.
 This estimate hints at the independence of conormal derivatives of $u$
 from the conormal derivatives of~$\nabla u$.
 A similar comment applies to the pressure term; see Proposition~\ref{P.Pre}.
 Another difference between \eqref{mrassumption} and \eqref{ourassumption3}
 is the requirement $u_0 \in W^{2,\infty}_\cco$.
 In fact, it is possible to obtain from \eqref{EQ.emb} that
 \begin{align}
   \Vert u\Vert_{W^{2,\infty}_\cco}^2
   \lec
   \Vert \nabla u\Vert_{H^{2+s_1}_\cco}\Vert u\Vert_{H^{2+s_2}_\cco}
   ,\llabel{hypo}
 \end{align}
for $s_1+s_2 = 2+$.   
This means that $u \in W^{2,\infty}_\cco$ is a consequence of
$u\in H^5_\cco$ and~$\nabla u\in H^{2}_\cco$.
Since we have much weaker assumptions,   
we do not rely on any embedding 
to propagate the assertion $ u \in W^{2,\infty}_\cco$ uniformly on some time interval.
Therefore, we have to estimate this term directly.
Note that, in~\cite[Theorem3]{BILN}, the authors
obtained the existence for the Euler equations
if the initial data belongs to the space
  \begin{align}
   (u,\nabla u,\curl u)|_{t=0}
   \in
   H^4_\cco
     \times
   H^3_\cco
     \times
   W^{1,\infty}_\cco
   ,
   \llabel{EQ06}
  \end{align}
in the case of a general domain.
 
Theorem~\ref{T04} is a new existence result for the Euler equations.
In addition, 
it is an extension of the previously mentioned existence results 
since the required regularity level on the initial normal derivative of $u$ is minimal. 
We note in passing that 
it is challenging to improve on the a~priori estimates given in~Proposition~\ref{P.Ap} since
we cannot establish the short time persistence of
$u \in W^{2,\infty}_\cco$ when we work with fewer
integer conormal derivatives on either $u$ or~$\nabla u$.
Finally, we note that our results still hold when
in~\eqref{ourassumption3} we  
require $(\omega_1,\omega_2) \in L^\infty$
instead of $\nabla u_0 \in L^\infty$.

The paper is structured as follows. Section~\ref{sec.pre} contains the main result, the fundamental commutator lemma, and a summary of a~priori estimates.
The a~priori estimates are proven in detail in Section~\ref{sec.apri}, containing the conormal, pressure, and uniform velocity estimates.
The last section then contains the proof of the main existence theorem.

\startnewsection{Preliminaries and the Main Result}{sec.pre}
We consider $\Omega =\mathbb{R}^3_+$, and 
 for $x \in \Omega$ denote 
 $x = (x_\hh,z) = (x_1,x_2,z) \in \mathbb{R}^2 \times \mathbb{R}_+$.
We consider the Euler equations
 \begin{align}
  \partial_t u + u \cdot \nabla u + \nabla p =0 
  \comma
  \nabla \cdot u = 0 \comma (x,t) \in \Omega \times (0,T)
  ,
  \label{NSE0}
 \end{align}
subject to the slip boundary condition  
 \begin{align}
  u_3 = 0 \comma (x,t)\in \{z=0\}\times (0,T).
  \label{hnavierbdry}
 \end{align}
To state our main result, we first introduce conormal Sobolev  spaces.
Denote $\varphi(z) = z/(1+z)$, and let
$Z_1=\partial_1$, $Z_2=\partial_2$, and $Z_3=\varphi \partial_z$.
Then introduce
 \begin{align}
  \begin{split}
   H^m_\cco(\Omega)
   =&
   \{f \in L^2(\Omega) : Z^\alpha f \in L^2(\Omega),\  \alpha \in \mathbb{N}_0^3,\  0\le |\alpha|\le m  \},
   \\
   W^{m,\infty}_\cco(\Omega)
   =&
   \{f \in L^\infty(\Omega) : Z^\alpha f \in L^\infty(\Omega),\  \alpha \in \mathbb{N}_0^3,\  0\le |\alpha|\le m  \}
   ,
   \label{con.def}
  \end{split}
 \end{align} 
 equipped with the norms
 \begin{align}
  \begin{split}
   \Vert f\Vert_{H^m_\cco(\Omega)}^2
   =&
   \Vert f\Vert_{m}^2
   =
   \sum_{|\alpha|\le m} \Vert Z^\alpha f\Vert_{L^2(\Omega)}^2,
   \\
   \Vert f\Vert_{W^{m,\infty}_\cco(\Omega)}
   =&
   \Vert f\Vert_{m,\infty}
   =
   \sum_{|\alpha|\le m} \Vert Z^\alpha f\Vert_{L^\infty(\Omega)}
   .
   \label{norm}
  \end{split}
 \end{align}
 We use $\Vert f\Vert_{L^2}$ and $\Vert f\Vert_{L^\infty}$
 to denote the usual $L^2$ and $L^\infty$ norms, respectively.
 The following theorem is our main result.
 
\cole
\begin{Theorem}[Euler equations in the Sobolev conormal spaces]
\label{T04}
Let $u_0 \in H^4_\cco(\Omega) \cap W^{2,\infty}_\cco(\Omega) \cap W^{1,\infty}(\Omega)$
be such that $\div u_0 = 0$, and $u_0 \cdot n= 0$ on~$\partial \Omega$.
For some $T>0$,
there exists  a unique solution 
$u \in L^\infty(0,T;H^4_\cco(\Omega)\cap W^{2,\infty}_\cco(\Omega)\cap W^{1,\infty}(\Omega))$
to the Euler equations
\eqref{NSE0}--\eqref{hnavierbdry}
such that
   \begin{align}
    \sup_{[0,T]}
    (\Vert u(t)\Vert_{4}^2
    +\Vert u(t)\Vert_{2,\infty}^2
    +\Vert \nabla u(t)\Vert_{L^\infty}^2)
    \le M,
    \llabel{EQ.main3}
   \end{align}
for $M>0$ depending on the norms of the initial data. 
 \end{Theorem}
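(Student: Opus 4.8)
The plan is to construct the solution by a standard energy-method scheme: regularize the initial data, solve a sequence of approximate problems for which well-posedness is classical, derive uniform-in-approximation bounds from the a~priori estimates summarized in Proposition~\ref{P.Ap} (the conormal estimate of Proposition~\ref{P.Con}, the pressure estimate of Proposition~\ref{P.Pre}, and the uniform velocity estimate), and then pass to the limit. Concretely, I would mollify $u_0$ tangentially and project onto the divergence-free, $u\cdot n=0$ constraint to obtain $u_0^\epsilon$ with $u_0^\epsilon \to u_0$ in $H^4_\cco\cap W^{2,\infty}_\cco\cap W^{1,\infty}$ and with $u_0^\epsilon$ smooth enough to invoke a classical local existence theorem (e.g.\ Kato's theory in sufficiently high isotropic Sobolev spaces on the half-space, or a viscous Navier-slip approximation with $\nu=\epsilon$ that one then lets vanish as in \cite{AK,MR1}). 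For each $\epsilon$ this yields a solution $u^\epsilon$ on a time interval $[0,T_\epsilon]$; the work is to show $T_\epsilon$ can be taken uniform in $\epsilon$.

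The heart of the argument is the \emph{a~priori} bound. Set
\begin{align}
   Y(t) = \Vert u(t)\Vert_4^2 + \Vert u(t)\Vert_{2,\infty}^2 + \Vert \nabla u(t)\Vert_{L^\infty}^2 .
   \notag
\end{align}
Combining the conormal estimate
\begin{align}
   \frac{d}{dt}\Vert u\Vert_4^2
   \lec \Vert \nabla p\Vert_{H^3_\cco}\Vert u\Vert_{H^4_\cco}
   + (1+\Vert \nabla u\Vert_{L^\infty}+\Vert u\Vert_{W^{2,\infty}_\cco})\Vert u\Vert_{H^4_\cco}^2,
   \notag
\end{align}
the pressure estimate of Proposition~\ref{P.Pre} (which should bound $\Vert \nabla p\Vert_{H^3_\cco}$ by a polynomial in $Y$, using the structure $-\Delta p = \partial_i u_j\,\partial_j u_i$ together with the Neumann-type boundary condition coming from \eqref{hnavierbdry}), and the uniform velocity estimate controlling $\tfrac{d}{dt}(\Vert u\Vert_{2,\infty}^2+\Vert\nabla u\Vert_{L^\infty}^2)$, I expect a differential inequality of the form $\dot Y \lec P(Y)$ for some polynomial $P$. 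This gives a time $T=T(Y(0))>0$, independent of $\epsilon$, on which $Y(t)\le M$ with $M$ depending only on the initial norms. Care is needed that every constant in Proposition~\ref{P.Ap} is independent of the regularization parameter (and of $\nu$ if the viscous route is used), so that the bound genuinely survives the limit.

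With the uniform bound in hand, I would pass to the limit: from $\sup_{[0,T]}Y(t)\le M$ and the equation, $\partial_t u^\epsilon$ is bounded in a lower conormal space, so by Aubin--Lions $u^\epsilon \to u$ strongly in, say, $C([0,T];H^{3}_{\cco,\loc})$ and weak-$*$ in $L^\infty(0,T;H^4_\cco\cap W^{2,\infty}_\cco\cap W^{1,\infty})$; this is enough to pass to the limit in the nonlinearity and the pressure, and lower semicontinuity of the norms preserves the bound by $M$. Uniqueness I would prove directly at the level of the difference $w=u^{(1)}-u^{(2)}$ of two solutions: $w$ solves a linear transport-type equation with forcing controlled by $\nabla u^{(i)}\in L^\infty$ and the pressure difference, and a Grönwall estimate in $L^2$ (or $H^1_\cco$) closes—here the hypothesis $\nabla u\in L^\infty$ is exactly what is needed, and no conormal regularity of $\nabla u$ enters. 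The main obstacle is the a~priori estimate itself, specifically closing the conormal and uniform estimates \emph{without} any bound on $\nabla u$ in $H^m_\cco$ or even in $W^{1,\infty}_\cco$; this is the content of Propositions~\ref{P.Con}--\ref{P.Pre} and the uniform velocity estimate, and the commutator lemma of Section~\ref{sec.pre} is the key tool that makes the conormal commutators $[Z^\alpha, u\cdot\nabla]u$ expressible purely in terms of $\Vert u\Vert_{H^4_\cco}$, $\Vert u\Vert_{W^{2,\infty}_\cco}$, and $\Vert \nabla u\Vert_{L^\infty}$. The remaining subtlety is the boundary behavior: since $Z_3=\varphi\partial_z$ degenerates at $z=0$, the conormal operators are tangential to $\partial\Omega$ and no boundary terms with normal derivatives of $u$ are produced in the integrations by parts, which is what allows the scheme to avoid controlling $\partial_z u$ in any strong norm.
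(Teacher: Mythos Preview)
Your scheme is essentially the one the paper uses: regularize the data to lie in $H^5(\Omega)$, invoke classical Euler existence to get $u^\epsilon\in C([0,T_\epsilon);H^5)$, apply Proposition~\ref{P.Ap} to obtain a uniform lifespan $T_0$ and bound $M_0$, pass to the limit, and prove uniqueness by a Gr\"onwall estimate on the $L^2$ norm of the difference. Two small points are worth correcting. First, you cannot in general arrange $u_0^\epsilon\to u_0$ \emph{strongly} in $W^{2,\infty}_\cco\cap W^{1,\infty}$; the paper only claims strong convergence in $H^4_\cco$ and weak-$*$ convergence in the $L^\infty$-based spaces, which is all that is needed since Proposition~\ref{P.Ap} requires only uniform bounds on those norms. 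Second, your alternative of a viscous Navier-slip approximation as in \cite{AK,MR1} would not close here: the uniform-in-$\nu$ estimates in those works require at least $\nabla u_0\in H^1_\cco$ (cf.\ \eqref{ourassumption}), whereas the present theorem assumes only $\nabla u_0\in L^\infty$. This is precisely why the paper abandons the inviscid-limit construction in favor of direct regularization plus classical $H^5$ Euler theory. Finally, where you invoke Aubin--Lions, the paper instead shows directly that $\{u^\epsilon\}$ is Cauchy in $L^\infty(0,T_0;L^2)$ via the same $L^2$ energy estimate you use for uniqueness; either route suffices to pass to the limit.
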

 \colb
 
 We note that $Z_1$ and $Z_2$
 commute with $\partial_z$, whereas $Z_3$ does not;
thus we frequently
separate $Z_\hh$ and $Z_{3}$ by writing
 \begin{align}
    Z^\alpha = Z^{\tilde{\alpha}}_\hh Z^k_3
    \comma \alpha = (\tilde{\alpha}, k) \in \mathbb{N}_0^2 \times \mathbb{N}_0
    .
    \llabel{ZhZ3}
 \end{align} 
To measure the commutator between $Z_3$ and $\partial_{z}$, we use the
 following identities.
 
 \cole
 \begin{Lemma}
  \label{L01}
Let $f$ be a smooth function. Then there exist smooth bounded functions
$\{c^k_{j,\varphi}\}_{j=0}^k$ and $\{\tilde{c}^k_{j,\varphi}\}_{j=0}^k$ of $z$, for $k \in \mathbb{N}$,
such that
  \begin{align}
    \begin{split}    
      &(i)\ \  Z^k_3 \partial_z f 
      =
      \sum_{j=0}^{k}
      c^k_{j,\varphi} \partial_z Z^j_3 f
      =\partial_z Z_3^kf + \sum_{j=0}^{k-1}
      c^k_{j,\varphi} \partial_z Z^j_3 f
      ,
      \\
      &(ii)\ \  \partial_z Z^k_3 f 
      =
      \sum_{j=0}^{k}
      \tilde{c}^k_{j,\varphi} Z^j_3 \partial_z f
      =Z_3^k \partial_z f +\sum_{j=0}^{k-1}
      \tilde{c}^k_{j,\varphi} Z^j_3 \partial_z f
      ,
      \\
      &(iii)\ \  Z^k_3 \partial_{zz} f 
      =
      \sum_{j=0}^{k}
      \sum_{l=0}^{j}
      \left(
      c^j_{l,\varphi} c^k_{j,\varphi} \partial_{zz} Z^l_3 f 
      +
      (c^j_{l,\varphi})' c^k_{j,\varphi} \partial_{z} Z^l_3 f 
      \right)
      ,
      \\
      &(iv)\ \  \partial_{zz} Z^k_3 f 
      =
      \sum_{j=0}^{k}
      \sum_{l=0}^{j}
      \tilde{c}^j_{l,\varphi} \tilde{c}^k_{j,\varphi} Z^l_3 \partial_{zz} f 
      +
      \sum_{j=0}^{k}
      (\tilde{c}^k_{j,\varphi})' Z^j_3 \partial_z f 
      ,
      \label{EQL02}
    \end{split}
  \end{align}
  where $\tilde{c}^k_{k,\varphi} = 1 = c^k_{k,\varphi}$.
 \end{Lemma}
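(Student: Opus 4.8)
The plan is to prove all four identities by induction on $k$, using $k=1$ as the base case and deriving (iii)--(iv) by iterating (i)--(ii). For $k=1$ the computation is immediate: for any smooth $g$ one has $\partial_z Z_3 g=\partial_z(\varphi\partial_z g)=\varphi'\partial_z g+\varphi\partial_{zz}g$, whereas $Z_3\partial_z g=\varphi\partial_{zz}g$ by definition, so that
\begin{align*}
 Z_3\partial_z g=\partial_z Z_3 g-\varphi'\partial_z g
 \qquad\text{and}\qquad
 \partial_z Z_3 g=Z_3\partial_z g+\varphi'\partial_z g .
\end{align*}
These are (i) and (ii) for $k=1$, with $c^1_{1,\varphi}=\tilde{c}^1_{1,\varphi}=1$, $c^1_{0,\varphi}=-\varphi'$, $\tilde{c}^1_{0,\varphi}=\varphi'$, and $\varphi'(z)=(1+z)^{-2}$.

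For the inductive step of (i), I would apply $Z_3=\varphi\partial_z$ to $Z_3^k\partial_z f=\sum_{j=0}^k c^k_{j,\varphi}\partial_z Z_3^j f$ and expand by the product rule, obtaining
\begin{align*}
 Z_3^{k+1}\partial_z f
 =\sum_{j=0}^k\bigl(\varphi(c^k_{j,\varphi})'\,\partial_z Z_3^j f+\varphi\, c^k_{j,\varphi}\,\partial_{zz}Z_3^j f\bigr).
\end{align*}
The only term not already in the desired shape is $\varphi\partial_{zz}Z_3^j f=Z_3\partial_z(Z_3^j f)$, which the base-case relation (applied with $g=Z_3^j f$) rewrites as $\partial_z Z_3^{j+1}f-\varphi'\partial_z Z_3^j f$. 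Substituting this and regrouping by the power of $Z_3$ gives (i) at level $k+1$ with $c^{k+1}_{j,\varphi}=\varphi(c^k_{j,\varphi})'-\varphi' c^k_{j,\varphi}+c^k_{j-1,\varphi}$ (interpreting out-of-range coefficients as $0$), and in particular $c^{k+1}_{k+1,\varphi}=c^k_{k,\varphi}=1$. The step for (ii) is entirely parallel: starting from $\partial_z Z_3(Z_3^k f)=Z_3\partial_z(Z_3^k f)+\varphi'\partial_z Z_3^k f$, expand $\partial_z Z_3^k f$ by the hypothesis and use $Z_3(\tilde{c}^k_{j,\varphi}Z_3^j\partial_z f)=\varphi(\tilde{c}^k_{j,\varphi})'Z_3^j\partial_z f+\tilde{c}^k_{j,\varphi}Z_3^{j+1}\partial_z f$, which yields $\tilde{c}^{k+1}_{j,\varphi}=\varphi'\tilde{c}^k_{j,\varphi}+\varphi(\tilde{c}^k_{j,\varphi})'+\tilde{c}^k_{j-1,\varphi}$ and $\tilde{c}^{k+1}_{k+1,\varphi}=1$.

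Identities (iii) and (iv) should then follow by applying (i), respectively (ii), twice. For (iii): apply (i) with $f$ replaced by $\partial_z f$ to get $Z_3^k\partial_{zz}f=\sum_{j=0}^k c^k_{j,\varphi}\,\partial_z\bigl(Z_3^j\partial_z f\bigr)$, substitute $Z_3^j\partial_z f=\sum_{l=0}^j c^j_{l,\varphi}\partial_z Z_3^l f$ (again (i)), and carry out the outer $\partial_z$; this reproduces $\sum_{j=0}^k\sum_{l=0}^j\bigl(c^j_{l,\varphi}c^k_{j,\varphi}\partial_{zz}Z_3^l f+(c^j_{l,\varphi})'c^k_{j,\varphi}\partial_z Z_3^l f\bigr)$. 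For (iv): differentiate (ii) in $z$ to get $\partial_{zz}Z_3^k f=\sum_{j=0}^k\bigl((\tilde{c}^k_{j,\varphi})'Z_3^j\partial_z f+\tilde{c}^k_{j,\varphi}\,\partial_z Z_3^j\partial_z f\bigr)$, then replace $\partial_z Z_3^j\partial_z f=\sum_{l=0}^j\tilde{c}^j_{l,\varphi}Z_3^l\partial_{zz}f$ using (ii) applied to $\partial_z f$; collecting terms gives the stated formula.

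I do not expect a genuine obstacle here; the whole content is the recursion bookkeeping together with the claim that every coefficient is smooth and bounded on $[0,\infty)$. The latter becomes transparent once one notes that $\varphi(z)=1-(1+z)^{-1}$, so $\varphi$ and $\varphi'$ lie in the algebra $\mathcal A$ of finite linear combinations of $\{(1+z)^{-m}:m\in\mathbb{N}_0\}$. Since $\mathcal A$ is closed under products, under $d/dz$, and under multiplication by $\varphi$, and since every element of $\mathcal A$ is smooth and bounded on $[0,\infty)$, the recursions above keep $c^k_{j,\varphi},\tilde{c}^k_{j,\varphi}\in\mathcal A$ by induction on $k$; and the normalization $c^k_{k,\varphi}=\tilde{c}^k_{k,\varphi}=1$ is preserved because at each level the top-order term is generated with coefficient $1$ and receives no further contributions. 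If anything, the only mildly delicate point is arranging the double sums in (iii)--(iv) so that the index ranges coincide exactly with the statement.
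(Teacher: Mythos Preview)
Your argument is correct: the base-case commutator $Z_3\partial_z g=\partial_z Z_3 g-\varphi'\partial_z g$ together with the induction you describe yields (i) and (ii) with the explicit recursions you wrote, and (iii)--(iv) are indeed obtained by iterating (i) and (ii) exactly as you indicate; the closure of the algebra $\mathcal A$ under products, $d/dz$, and multiplication by $\varphi$ cleanly handles the smoothness and boundedness of the coefficients. The paper itself does not supply a proof of this lemma --- it is stated and then used --- so there is nothing to compare against; your inductive bookkeeping is precisely the standard way one would justify it, and your explicit recursions and the observation that the top coefficient stays equal to $1$ are all accurate.
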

 \colb

Note that the prime indicates the derivative with respect to the
variable~$z$. Also, note that the functions
${c}^k_{j,\varphi}$ and $\tilde{c}^k_{j,\varphi}$ depend on~$\phi$.

In addition to Lemma~\ref{L01}, we shall utilize the inequalities
 \begin{align}
  \Vert Z^{\alpha_1}f 
  Z^{\alpha_2}g\Vert_{L^2}
  \lec
  \Vert f\Vert_{L^\infty}
  \Vert g\Vert_{k}
  +
  \Vert f\Vert_{k}
  \Vert g\Vert_{L^\infty}
  \comma |\alpha_1|+|\alpha_2|=k\in\mathbb{N}_0       
  \comma f,g \in L^\infty \cap H^k_\cco
  \label{EQ.int}
 \end{align}
 and
 \begin{align}
  \Vert f\Vert_{L^\infty}^2
  \lec
  \Vert (I-\Delta_h)^\frac{s_1}{2}\partial_z f\Vert_{L^2}
  \Vert (I-\Delta_h)^\frac{s_2}{2}f\Vert_{L^2}
  \comma s_1,s_2 \ge 0 \comma s_1+s_2 >2.
  \label{EQ.emb}
 \end{align}
The proofs of \eqref{EQ.int} and  \eqref{EQ.emb}
can be found in~\cite{Gu} and~\cite{MR2}, respectively.
We shall also employ \eqref{hnavierbdry}, the Hardy inequality, and the 
incompressibility requirement to write
  \begin{align}
   \left\Vert \frac{u_3}{\varphi}\right\Vert_{W^{k,p}_\cco}
   \lec \Vert Z_\hh u_\hh\Vert_{W^{k,p}_\cco}
   \comma
   p \in [1,\infty],
   \label{EQ.u3}
  \end{align}
where $W^{k,p}_\cco$ is defined
analogously to~\eqref{con.def}.
Now, we state our a~priori estimates.
 
 \cole
 \begin{Proposition}[A priori estimates]
  \label{P.Ap}
  Given $u_0 \in H^4_\cco(\Omega) \cap W^{2,\infty}_\cco(\Omega) \cap W^{1,\infty}(\Omega)$,
  there exists
\begin{align}
   T_0 = T_0(\Vert u_0\Vert_{4},\Vert u_0\Vert_{2,\infty},\Vert \nabla u_0\Vert_{L^\infty})>0,  
   \llabel{time0}
\end{align}
and 
   \begin{align}
     M_0 = M_0(T_0,\Vert u_0\Vert_{4},\Vert u_0\Vert_{2,\infty},\Vert \nabla u_0\Vert_{L^\infty})>0,  
     \llabel{bound}
   \end{align}
  with the following property:
If $u \in C([0,T_0];H^5(\Omega))$
is a solution of the Euler equations \eqref{NSE0}--\eqref{hnavierbdry}
on $[0,T_0]$, then
   \begin{align}
    \sup_{[0,T_0]}(\Vert u(t)\Vert_{4}
    +\Vert u(t)\Vert_{2,\infty}
    +\Vert \nabla u(t)\Vert_{L^\infty})   
    \le M_0.
    \label{ap}
   \end{align}
\end{Proposition}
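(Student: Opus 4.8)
The plan is to close the three quantities $\Vert u\Vert_4$, $\Vert u\Vert_{2,\infty}$, and $\Vert\nabla u\Vert_{L^\infty}$ in a single Gronwall-type argument, relying on the three a~priori estimates that are stated later in the excerpt. Concretely, set
\[
  E(t) = \Vert u(t)\Vert_4^2 + \Vert u(t)\Vert_{2,\infty}^2 + \Vert\nabla u(t)\Vert_{L^\infty}^2 ,
\]
and observe that $E(0)$ is finite by hypothesis on $u_0$. The strategy is to show $\frac{d}{dt}E \lec P(E)$ for some fixed polynomial (or at least continuous increasing) function $P$, independent of the $H^5$ norm of $u$, so that $E$ stays bounded on a time interval $[0,T_0]$ depending only on $E(0)$; the $C([0,T_0];H^5)$ hypothesis is used merely to justify that all the manipulations (integration by parts, time differentiation of norms, the elliptic pressure estimate) are legitimate, but no $H^5$ norm is allowed to appear in the final differential inequality.

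The key steps, in order, are as follows. \textbf{(1) Conormal velocity estimate.} Invoke Proposition~\ref{P.Con} (the estimate displayed as ``\texttt{our2}'' in the introduction) to get
\[
  \frac{d}{dt}\Vert u\Vert_4^2
  \lec \Vert\nabla p\Vert_{H^3_\cco}\Vert u\Vert_{H^4_\cco}
       + (1 + \Vert\nabla u\Vert_{L^\infty} + \Vert u\Vert_{W^{2,\infty}_\cco})\Vert u\Vert_{H^4_\cco}^2 .
\]
\textbf{(2) Pressure estimate.} Invoke Proposition~\ref{P.Pre} to bound $\Vert\nabla p\Vert_{H^3_\cco}$ by $C(1 + \Vert\nabla u\Vert_{L^\infty} + \Vert u\Vert_{W^{2,\infty}_\cco})\Vert u\Vert_{H^4_\cco}$ (this is the natural form, since $p$ solves an elliptic Neumann problem with right-hand side quadratic in $\nabla u$, and the conormal regularity of $\nabla p$ is controlled by three conormal derivatives of $u$ together with the $L^\infty$-type norms). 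Substituting into (1) removes the pressure and yields $\frac{d}{dt}\Vert u\Vert_4^2 \lec (1 + \Vert\nabla u\Vert_{L^\infty} + \Vert u\Vert_{W^{2,\infty}_\cco})\Vert u\Vert_4^2 \lec (1+E)E$. \textbf{(3) Uniform velocity estimate.} Invoke the corresponding proposition (the ``uniform velocity estimate'' referenced in the roadmap) to bound $\frac{d}{dt}(\Vert u\Vert_{2,\infty}^2 + \Vert\nabla u\Vert_{L^\infty}^2)$ by a continuous increasing function of $E$; here one differentiates the transport equation for $Z^\alpha u$ with $|\alpha|\le 2$ and for $\nabla u$ along characteristics, using $\Vert u\Vert_4$ (via the embedding \eqref{EQ.emb}) to control the forcing terms that are not directly of $L^\infty$-conormal type, and using \eqref{EQ.u3} and Lemma~\ref{L01} to handle the normal component and the $Z_3$--$\partial_z$ commutators. \textbf{(4) Combine and run Gronwall.} Adding the three inequalities gives $\frac{d}{dt}E \le C(1+E)^N =: P(E)$ for some fixed $N$, whence by comparison with the scalar ODE $\dot y = P(y)$, $y(0)=E(0)$, there exist $T_0 = T_0(E(0)) > 0$ and $M_0 = M_0(T_0, E(0))$ with $\sup_{[0,T_0]} E \le M_0^2$, which is exactly \eqref{ap}.

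The main obstacle is step (3), the propagation of $\Vert u\Vert_{2,\infty}$, precisely because — as the authors emphasize in the introduction — there is no integrability or differentiability assumption on $\nabla u$, so one cannot deduce $u \in W^{2,\infty}_\cco$ from a Sobolev embedding of the form displayed as ``\texttt{hypo}'' (that would require $u\in H^5_\cco$ and $\nabla u\in H^2_\cco$). Instead the bound on $Z^\alpha u$, $|\alpha|\le 2$, must be obtained directly from the transport structure: writing $\partial_t Z^\alpha u + u\cdot\nabla Z^\alpha u = -[Z^\alpha, u\cdot\nabla]u - Z^\alpha\nabla p$ and estimating the commutator in $L^\infty$, the danger is that the commutator produces a term like $\Vert\nabla u\Vert_{L^\infty}\Vert u\Vert_{W^{2,\infty}_\cco}$ (harmless, feeds back into $E$) but also a term involving $Z^3 u$ or $\partial_z Z^\alpha u$ that is \emph{not} available in $L^\infty$; the resolution is that every such term either carries a factor $\varphi$ (from $Z_3 = \varphi\partial_z$) that combines with $u_3/\varphi$ via \eqref{EQ.u3}, or else can be traded for a conormal $H^4$ norm of $u$ through \eqref{EQ.emb} with $s_1 + s_2 > 2$. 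Carrying this bookkeeping through — keeping all ``bad'' normal derivatives attached to the vanishing factor $\varphi$ or to $u_3$, and estimating the pressure term $Z^\alpha\nabla p$ in $L^\infty$ by the conormal elliptic regularity of Proposition~\ref{P.Pre} — is the technical heart of the proposition; steps (1), (2), (4) are then essentially bookkeeping once Propositions~\ref{P.Con} and~\ref{P.Pre} are in hand.
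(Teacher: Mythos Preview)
Your proposal is correct and follows essentially the same route as the paper: collect the conormal estimate (Proposition~\ref{P.Con}), the pressure estimate (Proposition~\ref{P.Pre}), and the uniform estimate (Proposition~\ref{P.Inf}), substitute the pressure bound into the other two, and close via a Gr\"onwall/comparison argument on the combined quantity $E(t)$. The paper's version is written in integrated form (yielding $N^2(t)\le C(N^2(0)+\int_0^t N^3)$) rather than as a differential inequality, and it treats the $\Vert\nabla u\Vert_{L^\infty}$ part of step~(3) via the vorticity equation rather than by differentiating the velocity equation directly, but these are cosmetic differences; the structure you outline---including identifying the $W^{2,\infty}_\cco$ propagation as the genuine technical content, handled in the auxiliary proposition rather than here---matches the paper exactly.
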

\colb

Based on the three estimates given by Propositions~\ref{P.Con}, \ref{P.Pre}, and \ref{P.Inf},
we prove Proposition~\ref{P.Ap} in the following section.
First, in Proposition~\ref{P.Con}, we estimate $u$ in~$H^4_\cco(\Omega)$.
Next, in Proposition~\ref{P.Pre}, we present estimates for $\nabla p$ and~$D^2 p$.
Finally, in Proposition~\ref{P.Inf}, we establish the $L^\infty$ bounds.
Then, the proof of Proposition~\ref{P.Ap} follows by a Gr\"onwall argument.
Having the a~priori estimates, we begin the construction in Section~\ref{sec.main}
by regularizing the initial data. Then, we obtain approximate solutions by utilizing the classical 
well-posedness theory for Euler equations.
The maximal time of existence of these solutions depends on the approximation level
and may shrink to $0$ as we progress further in the approximation.
However, since the Lipschitz norm of $u$ on $[0,T_0]$ remains bounded, we prove that this does not happen. 
Finally, we pass to the limit by proving that the sequence of approximate
solutions is Cauchy in~$L^\infty_tL^2_x$.
 
 \startnewsection{A~priori estimates}{sec.apri}
In this section, we prove Proposition~\ref{P.Ap} by establishing conormal, pressure, and $L^\infty$
estimates. Manipulations leading to the a~priori estimates are justified
under the assumption $u \in C([0,T_0];H^5(\Omega))$. We note that $H^5(\Omega)$
is not the weakest space to prove \eqref{ap}, and the regularity level needed for
the a~priori estimates does not affect our main result.
  
\subsection{Conormal derivative estimates}
First, we estimate $\Vert u\Vert_{4}$ in
terms of $\Vert u\Vert_{W^{1,\infty}}$, $\Vert u\Vert_{2,\infty}$,
and $\Vert \nabla p\Vert_{3}$.
 
\cole
\begin{Proposition}
   \label{P.Con}
   Let $u \in C([0,T];H^5(\Omega))$
   be a solution of the Euler equations \eqref{NSE0}--\eqref{hnavierbdry}
   for some $T>0$.
Then we have the inequality
   \begin{align}
     \Vert u(t)\Vert_{4}^2
     \lec
     \Vert u_0\Vert_{4}^2
     +\int_0^t
       \Bigl(
         \Vert u(s)\Vert_{4}^2 
         (\Vert u(s)\Vert_{W^{1,\infty}}+
         \Vert u(s)\Vert_{2,\infty}+1
         )+\Vert u(s)\Vert_{4}\Vert \nabla p(s)\Vert_{3}
       \Bigr)\,ds
     ,
     \label{EQ.Con}
   \end{align}
for $t \in [0,T]$. 
\end{Proposition}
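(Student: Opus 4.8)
The plan is to run a conormal energy estimate. For each $\alpha\in\mathbb{N}_0^3$ with $|\alpha|\le 4$ I would apply $Z^\alpha$ to the momentum equation in \eqref{NSE0}, pair the result with $Z^\alpha u$ in $L^2(\Omega)$, and sum over $\alpha$; under the standing regularity $u\in C([0,T];H^5(\Omega))$ all the manipulations are justified. Writing $\partial_t Z^\alpha u=-Z^\alpha(u\cdot\nabla u)-Z^\alpha\nabla p$ and isolating the transported part yields
\begin{align*}
\frac12\frac{d}{dt}\Vert u\Vert_4^2
&=-\sum_{|\alpha|\le 4}\int_\Omega (u\cdot\nabla Z^\alpha u)\cdot Z^\alpha u\,dx\\
&\quad-\sum_{|\alpha|\le 4}\int_\Omega [Z^\alpha,u\cdot\nabla]u\cdot Z^\alpha u\,dx-\sum_{|\alpha|\le 4}\int_\Omega Z^\alpha\nabla p\cdot Z^\alpha u\,dx .
\end{align*}
The first sum vanishes: integration by parts gives $\int_\Omega(u\cdot\nabla Z^\alpha u)\cdot Z^\alpha u\,dx=\tfrac12\int_{\partial\Omega}(u\cdot n)|Z^\alpha u|^2-\tfrac12\int_\Omega(\div u)|Z^\alpha u|^2=0$ by \eqref{NSE0}--\eqref{hnavierbdry}. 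It remains to estimate the commutator and pressure sums by the integrand of \eqref{EQ.Con}.

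For the pressure sum the aim is to move one derivative off $\nabla p$, since only $\Vert\nabla p\Vert_3$ is available (the $\alpha=0$ term is zero by incompressibility). For $\alpha\ne 0$ one integrates by parts in each variable: tangentially $Z^\alpha\partial_j p=\partial_j Z^\alpha p$ with no boundary term, while for the normal variable Lemma~\ref{L01}(i) gives $Z^\alpha\partial_z p=\partial_z Z^\alpha p+\sum_{j<k}c^k_{j,\varphi}\,\partial_z Z_\hh^{\tilde\alpha}Z_3^j p$, whose first summand is integrated by parts in $z$; the boundary contribution vanishes because $Z^\alpha u_3=0$ on $\{z=0\}$ (purely tangential derivatives preserve $u_3|_{z=0}=0$, and each factor $Z_3=\varphi\partial_z$ carries $\varphi(0)=0$). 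What is left is $-\int_\Omega (Z^\alpha p)(\div Z^\alpha u)\,dx$ plus controlled remainders; here $\Vert Z^\alpha p\Vert_{L^2}\lec\Vert\nabla p\Vert_3$ (pull out one more tangential derivative when $\alpha$ has a horizontal index, and otherwise $Z_3^k p=Z_3^{k-1}(\varphi\partial_z p)$ is a bounded-coefficient combination of $Z_3^{\le k-1}\partial_z p$), while $\Vert\div Z^\alpha u\Vert_{L^2}\lec\Vert u\Vert_4$ because $\div Z^\alpha u=[\partial_z,Z^\alpha]u_3$, which by Lemma~\ref{L01}(ii) and the incompressibility relation $\partial_z u_3=-\nabla_\hh\cdot u_\hh$ is a conormal derivative of $u$ of order at most $4$. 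Thus the pressure sum is $\lec\Vert\nabla p\Vert_3\Vert u\Vert_4$.

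The heart of the matter is $[Z^\alpha,u\cdot\nabla]u=[Z^\alpha,u_\hh\cdot\nabla_\hh]u+[Z^\alpha,u_3\partial_z]u$. Since $Z_1,Z_2,Z_3$ are derivations commuting with $\nabla_\hh$, the tangential bracket equals $\sum_{0<\beta\le\alpha}\binom{\alpha}{\beta}(Z^\beta u_\hh)\cdot\nabla_\hh Z^{\alpha-\beta}u$, whose $L^2$ norm is $\lec\Vert u\Vert_{W^{1,\infty}}\Vert u\Vert_4$ by \eqref{EQ.int} (after pulling one derivative out of $Z^\beta u_\hh$), so it pairs with $Z^\alpha u$ to give $\lec\Vert u\Vert_{W^{1,\infty}}\Vert u\Vert_4^2$. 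The normal bracket is the delicate one, and the reason the usual Moser-type estimate only produces the forbidden term $\Vert\nabla u\Vert_{H^3_\cco}$: the naive bound involves $\partial_z u$ in conormal norms, which the spaces $H^m_\cco$ do not control. The point is that $\partial_z u$ appears only against the coefficient $u_3$, which vanishes on $\{z=0\}$: writing $u_3\partial_z=\tfrac{u_3}{\varphi}Z_3$ and using $[Z_3,Z^\alpha]=0$ reduces $[Z^\alpha,u_3\partial_z]u$ to a sum of terms $\binom{\alpha}{\beta}(Z^\beta(u_3/\varphi))(Z^{\alpha-\beta}Z_3 u)$. Here $Z^{\alpha-\beta}Z_3 u$ is a genuine conormal derivative of order $\le4$, and for $|\beta|\le 3$ one controls $\Vert Z^\beta(u_3/\varphi)\Vert_{H^s_\cco}$ by tangential derivatives of $u_\hh$ through \eqref{EQ.u3}, the Hardy inequality, and $\partial_z u_3=-\nabla_\hh\cdot u_\hh$, so after distributing $L^2$ and $L^\infty$ norms — and, where a direct split of a middle-order term fails, a further tangential integration by parts (and a normal one, legitimate since $Z^\beta u_3$ vanishes on $\{z=0\}$) to redistribute derivatives — these terms are $\lec\Vert u\Vert_4^2(\Vert u\Vert_{W^{1,\infty}}+\Vert u\Vert_{2,\infty})$. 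The one term that exceeds the available norms is $\beta=\alpha$ with $\alpha$ purely tangential of order $4$, namely $\int_\Omega (Z_\hh^{\tilde\alpha}u_3)(\partial_z u_i)(Z_\hh^{\tilde\alpha}u_i)\,dx$ (all derivatives on $u_3$, none on $\partial_z u$): estimating $Z_\hh^{\tilde\alpha}u_3/\varphi$ would cost a fifth conormal derivative. For this term I would substitute the vorticity identities $\partial_z u_1=\partial_1 u_3+\omega_2$, $\partial_z u_2=\partial_2 u_3-\omega_1$, and $\partial_z u_3=-\nabla_\hh\cdot u_\hh$, with $\omega=\curl u$; the coefficient then becomes a tangential derivative of $u_3$ plus a component of $\omega$, each bounded in $L^\infty$ by $\Vert u\Vert_{W^{1,\infty}}$, while $\Vert Z_\hh^{\tilde\alpha}u_3\Vert_{L^2}\le\Vert u\Vert_4$ directly, so the term is $\lec\Vert u\Vert_4^2\Vert u\Vert_{W^{1,\infty}}$ with no division by $\varphi$. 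I expect the main obstacle to be exactly this bookkeeping for the normal bracket — showing that every term stays within $\Vert u\Vert_4$, $\Vert u\Vert_{2,\infty}$, $\Vert\nabla u\Vert_{L^\infty}$ (never a fifth conormal derivative, never the uncontrolled norm $\Vert u\Vert_{3,\infty}$), with the vorticity substitution supplying the top-order cancellation. Collecting all the estimates gives $\tfrac{d}{dt}\Vert u\Vert_4^2\lec\Vert u\Vert_4^2(\Vert u\Vert_{W^{1,\infty}}+\Vert u\Vert_{2,\infty}+1)+\Vert u\Vert_4\Vert\nabla p\Vert_3$, and integration in $t$ yields \eqref{EQ.Con}.
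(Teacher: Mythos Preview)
Your proposal is correct and follows essentially the same route as the paper: apply $Z^\alpha$, test with $Z^\alpha u$, kill the transport term by incompressibility and the slip condition, reduce the pressure term to $\lec\Vert\nabla p\Vert_3\Vert u\Vert_4$ via Lemma~\ref{L01} and the cancellation $Z^\alpha\div u=0$, and control the convective commutator using \eqref{EQ.int} and \eqref{EQ.u3} after conormalizing $u_3\partial_z=(u_3/\varphi)Z_3$. The only notable difference is your handling of the top-order term $Z_\hh^{\tilde\alpha}u_3\,\partial_z u$ with $|\tilde\alpha|=4$: the vorticity substitution is unnecessary, since the paper (see \eqref{EQ33}) simply bounds $\Vert\partial_z u\Vert_{L^\infty}\le\Vert u\Vert_{W^{1,\infty}}$ directly---your detour through $\partial_z u_i=\partial_i u_3\pm\omega_j$ just reproduces this bound.
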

\colb
 
\begin{proof}[Proof of Proposition~\ref{P.Con}] 
We prove \eqref{EQ.Con} by showing
   \begin{align}
     \Vert u(t)\Vert_{m}^2
     \lec
     \Vert u_0\Vert_{m}^2
     +\int_0^t
       \Bigl(
         \Vert u(s)\Vert_{m}^2 
         (\Vert u(s)\Vert_{W^{1,\infty}}+
         \Vert u(s)\Vert_{2,\infty}+1
         )
	 +1_{m\geq1}\Vert u(s)\Vert_{m}\Vert \nabla p(s)\Vert_{m-1}
       \Bigr)\,ds
   \label{EQ03}
   \end{align}
for $m=0,1,2,3,4$ by induction on~$m$.
The standard $L^2$ estimate reads
$
      \frac{1}{2}\frac{d}{dt}\Vert u\Vert_{L^2}^2=0
$,
which implies \eqref{EQ03} for~$m=0$.
We now focus on the final step of the induction.
Therefore, our goal is to establish \eqref{EQ.Con}
under the assumption that
\eqref{EQ03} holds for $m=3$.
We begin by examining 
$Z^\alpha = Z^{\tilde{\alpha}}_\hh$ for $|\alpha|=4$;
this corresponds to the scenario where all the conormal derivatives are horizontal.
We apply $Z^\alpha = Z^{\tilde{\alpha}}_\hh$ to \eqref{NSE0}$_1$, obtaining
   \begin{align}
     \partial_t Z^{\tilde{\alpha}}_\hh u
     + u \cdot \nabla Z^{\tilde{\alpha}}_\hh u
     + \nabla Z^{\tilde{\alpha}}_\hh p
     =
     u \cdot \nabla Z^{\tilde{\alpha}}_\hh u
     -
     Z^{\tilde{\alpha}}_\hh(u \cdot \nabla u)
     ,
     \label{EQ01}
   \end{align}
where we used $Z_h \nabla = \nabla Z_h$.
We multiply \eqref{EQ01} with $Z^{\tilde{\alpha}}_\hh u$ and
integrate, which gives
\begin{align}
     \frac{1}{2}\frac{d}{dt}\Vert Z^{\tilde{\alpha}}_\hh u\Vert_{L^2}^2
     =
     -
     ( u \cdot \nabla Z^{\tilde{\alpha}}_\hh u
     -Z^{\tilde{\alpha}}_\hh(u \cdot \nabla u)
     ,Z^{\tilde{\alpha}}_\hh u)
     ,
     \label{EQ54}
   \end{align} 
since the pressure term  vanishes due to the divergence-free assumption.
We rewrite the commutator term in \eqref{EQ54} as
   \begin{align}
     u \cdot \nabla Z^{\tilde{\alpha}}_\hh u
     -
     Z^{\tilde{\alpha}}_\hh(u \cdot \nabla u)
     =
     -\sum_{1\le |\tilde{\beta}| \le 4} 
     {\tilde{\alpha} \choose \tilde{\beta}}
     (Z^{\tilde{\beta}}_\hh u_\hh 
     \cdot 
     \nabla_\hh Z^{\tilde{\alpha}-\tilde{\beta}}_\hh u
     + 
     Z^{\tilde{\beta}}_\hh u_3 
     \partial_z Z^{\tilde{\alpha}-\tilde{\beta}}_\hh u)
     .
     \label{EQ45}                              
   \end{align}
The first term inside the parentheses on the right-hand side of \eqref{EQ45}
is estimated
by taking the $L^\infty$ norm of the factor with fewer derivatives.
Thus, we have
   \begin{align}
     \Vert 
     Z^{\tilde{\beta}}_\hh u_\hh 
     \cdot 
     \nabla_\hh Z^{\tilde{\alpha}-\tilde{\beta}}_\hh u
     \Vert_{L^2}
     \lec
     \Vert u\Vert_{2,\infty}\Vert u\Vert_{4}
     ,
     \label{EQ30}
   \end{align} 
for all $1\leq |\tilde \beta|\leq 4$.
To estimate the remaining terms in \eqref{EQ45}
when $|\tilde{\beta}|\neq 4$, we conormalize $\partial_z$ by dividing and multiplying with~$\varphi$.
   Then, we employ \eqref{EQ.int}
   by choosing $(f,g)=(Z_\hh \frac{u_3}{\varphi}, Z_\hh Z_3 u)$
   for $|\tilde{\beta}|=2$ and~\eqref{EQ.u3} to estimate $Z_\hh \frac{u_3}{\varphi}$.
Summarizing, we arrive at
   \begin{equation}
     \label{EQ143}
     \left\Vert Z_\hh^{\tilde{\beta}} \frac{u_3}{\varphi} Z_3 Z_\hh^{\tilde{\alpha} - {\tilde{\beta}}} u\right\Vert_{L^2}  
     \lec 
     \begin{cases}
       \bigl\Vert Z \frac{u_3}{\varphi}\bigr\Vert_{L^\infty}\Vert u\Vert_{4}
       \lec \Vert u\Vert_{2,\infty}\Vert u\Vert_{4}, & |{\tilde{\beta}}|=1 \\
       \bigl\Vert Z_\hh \frac{u_3}{\varphi}\bigr\Vert_{L^\infty}\Vert Z_\hh Z_3 u\Vert_{2}
       +\left\Vert Z_\hh \frac{u_3}{\varphi}\right\Vert_{2}\Vert Z_\hh Z_3 u\Vert_{L^\infty}
       \lec \Vert u\Vert_{4}\Vert u\Vert_{2,\infty}, & |{\tilde{\beta}}|=2 \\
       \left\Vert Z_\hh \frac{u_3}{\varphi}\right\Vert_{2}\Vert Z_\hh Z_3 u\Vert_{L^\infty}
       \lec \Vert u\Vert_{4}\Vert u\Vert_{2,\infty}, &   |{\tilde{\beta}}|=3,
     \end{cases}
   \end{equation}
   Lastly, for the case $|\tilde{\beta}| = 4$, taking the uniform norm of $\partial_z u$ gives 
   \begin{align}
     \Vert Z_\hh^{\tilde{\beta}} u_3 \partial_z Z_\hh^{\tilde{\alpha} - {\tilde{\beta}}} u\Vert_{L^2}  
     \lec 
     \Vert u\Vert_{4}\Vert \partial_z u\Vert_{L^\infty}
     \comma   |{\tilde{\beta}}|=4
     .\label{EQ33} 
   \end{align}
Collecting \eqref{EQ54}--\eqref{EQ33} yields
   \begin{align}
       \frac{d}{dt}\Vert Z^{\tilde{\alpha}}_\hh u\Vert_{L^2}^2 
       \lec
       \Vert u\Vert_{4}
       (\Vert u\Vert_{4}+ 
       \Vert 
       u \cdot \nabla Z^{\tilde{\alpha}}_\hh u
       -
       Z^{\tilde{\alpha}}_\hh(u \cdot \nabla u)
       \Vert_{L^2}
       )
       \lec
       \Vert u\Vert_{4}^2
       (\Vert u\Vert_{2,\infty}+\Vert u\Vert_{W^{1,\infty}}+1)
       ,  
       \llabel{EQ38}   
   \end{align}
   from where, integrating in time, 
   \begin{align}
     \Vert Z^{\tilde{\alpha}}_\hh u(t)\Vert_{L^2}^2 
     \lec
     \Vert Z^{\tilde{\alpha}}_\hh u_0\Vert_{L^2}^2
     +\int_0^t \Vert u(s)\Vert_{4}^2
     (\Vert u(s)\Vert_{2,\infty}+\Vert u(s)\Vert_{W^{1,\infty}}+1)\,ds
     ,\llabel{EQ04}
   \end{align}
   for~$t \in [0,T]$.
   
We now focus on the case where $Z^\alpha = Z^{\tilde{\alpha}}_\hh Z^k_3$ 
for $1\le k \le 4$. We apply $Z^\alpha$ to \eqref{NSE0}, obtaining
   \begin{align}
     Z^\alpha u_t
     + u\cdot \nabla Z^\alpha u
     = 
     u\cdot \nabla Z^\alpha u - Z^\alpha(u\cdot \nabla u) 
     - Z^\alpha \nabla p
     \label{EQ46}
   \end{align}
and then test \eqref{EQ46} with $Z^\alpha u$, which leads to
   \begin{align}
     \frac{1}{2}\frac{d}{dt}\Vert Z^\alpha u\Vert_{L^2}^2
   =   
   (u\cdot \nabla Z^\alpha u - Z^\alpha(u\cdot \nabla u), Z^\alpha u )
   - (Z^\alpha \nabla p, Z^\alpha u)
   .
     \label{EQ55}
   \end{align}
The treatment of the convective term is almost identical to \eqref{EQ45}--\eqref{EQ33}, the only difference being
is the commutator of $\partial_z$ and $Z_3$, which is of lower order.
To make this precise, we rewrite the convection term as
   \begin{align}
     \begin{split}
       &
       u \cdot \nabla Z^\alpha u - Z^\alpha(u \cdot \nabla u)
       =
       u \cdot \nabla Z^\alpha u - u \cdot Z^\alpha \nabla u
       -
       \sum_{1 \le |\beta|\le |\alpha|}
       {\alpha \choose \beta}
       Z^\beta u \cdot Z^{\alpha-\beta} \nabla u
       \\&\indeq
       =
       u_3 \partial_z Z^\alpha u - u_3 Z^\alpha \partial_z u    
       -
       \sum_{1\le |\beta|\le |\alpha|}
       {\alpha \choose \beta}
       Z^\beta u \cdot Z^{\alpha-\beta} \nabla u
       \\&\indeq
       =
       -
       \sum_{j=0}^{k-1}
       \tilde{c}^k_{j,\varphi} u_3 \partial_z Z^{\tilde{\alpha}}_\hh Z^j_3 u  
       -
       \sum_{1\le |\beta|\le |\alpha|}
       {\alpha \choose \beta}
       Z^\beta u \cdot Z^{\alpha-\beta} \nabla u
       \\&\indeq
       = I_1 + I_2
       .
       \llabel{EQ13}        
     \end{split}
   \end{align}
   We treat $I_1$ by conormalizing $\partial_z$, i.e., we write
   \begin{align}
     I_1\lec
     \sum_{j=0}^{k-1}
     \left\Vert \tilde{c}^k_{j,\varphi} \frac{u_3}{\varphi} Z^{\tilde{\alpha}}_\hh Z^{j+1}_3 u\right\Vert_{L^2}  
     \lec
     \left\Vert \frac{u_3}{\varphi}\right\Vert_{L^\infty}\Vert u\Vert_{4}
     \lec
     \Vert u\Vert_{1,\infty}\Vert u\Vert_{4}
     ,
     \llabel{EQ56}
   \end{align}
utilizing~\eqref{EQ.u3}.
Now, we expand $I_2$ as  
   \begin{align}
     I_2 = -
     \sum_{1\le |\beta| \le |\alpha|}
     {\alpha \choose \beta} 
     (Z^{\beta} u_\hh 
     \cdot 
     \nabla_\hh Z^{\alpha-\beta} u
     + 
     Z^{\beta} u_3 
     Z^{\alpha-\beta} \partial_z u)
     = I_{21} + I_{22}
     .
     \llabel{EQ57}
   \end{align}
To control the sum $I_{21}$, we employ the bounds in~\eqref{EQ30}.
For $I_{22}$, we repeat \eqref{EQ143}--\eqref{EQ33} and obtain lower-order terms for the case~$|\beta|\neq 4$.
 Namely, due to the identities
   \begin{align}
     \frac{1}{\varphi} {Z_3 u_3} = \nabla_\hh \cdot u_\hh
    \aand
     \frac{1}{\varphi} Z_\hh = Z_\hh \frac{1}{\varphi} 
      ,\llabel{EQ32}
   \end{align}
   it suffices to commute $\partial_z$ and $Z_3$,
   and this results in lower order terms given by
   \eqref{EQL02}(i).
   In summary, we arrive at
   \begin{align}
     (u \cdot \nabla Z^\alpha u - Z^\alpha(u \cdot \nabla u),
     Z^\alpha u)
     \lec 
     \Vert u\Vert_{4}
     (\Vert u\Vert_{2,\infty}+\Vert u\Vert_{W^{1,\infty}}).
     \label{EQ58}
   \end{align}
The negative of the pressure term in~\eqref{EQ55} is expanded as
   \begin{align}
     (Z^\alpha \nabla p, Z^\alpha u)
     =
     (Z^\alpha \nabla_\hh p, Z^\alpha u_\hh)
     +
     (Z^\alpha \partial_z p, Z^\alpha u_3) 
     .
     \label{EQ47}
   \end{align}
   Although $Z_3$ and $\partial_z$ do not commute, 
   it is possible to reduce the number of conormal derivatives on the pressure by one. 
   Employing Lemma~\ref{L01}(i) for the last term in \eqref{EQ47}, we have
   \begin{align}
     (Z^k_3 \partial_z Z^{\tilde{\alpha}}_\hh p, Z^\alpha u_3)
     =
     (\partial_z Z^k_3  Z^{\tilde{\alpha}}_\hh p, Z^\alpha u_3)
     +
     \sum_{j=0}^{k-1}
     (c^k_{j,\varphi} \partial_z Z^j_3  Z^{\tilde{\alpha}}_\hh p, Z^\alpha u_3) 
     .
     \label{EQ48}
   \end{align}
   The sum above consists of lower-order terms that are controlled by
   $\Vert \nabla p\Vert_{3}\Vert u\Vert_{4}$.
For the first term on the right, integrating by parts gives
   \begin{align}
     (\partial_z Z^k_3 Z^{\tilde{\alpha}}_\hh p, Z^\alpha u_3) 
     =
     (Z^\alpha p, \partial_z Z^\alpha u_3)
     =
     (Z^\alpha p, Z^\alpha \partial_z  u_3)
     -
     \sum_{j=0}^{k-1}
     (c^k_{j,\varphi} Z^\alpha p, Z^j_3 Z^{\tilde{\alpha}}_\hh \partial_z u_3) 
     ,
     \label{EQ49}
   \end{align}
    since the boundary term vanishes due to $Z_3 = 0$
   on~$\partial \Omega$. 
Once again, the sum above consists of lower order terms, which are controlled by $\Vert \nabla p\Vert_{3}\Vert u\Vert_{4}$.
Now, we collect \eqref{EQ47}--\eqref{EQ49} and use $Z^\alpha \nabla \cdot u = 0$ to obtain
   \begin{align}
     -
     (Z^\alpha \nabla p, Z^\alpha u)
     \lec 
     \Vert \nabla p\Vert_{3}\Vert u\Vert_{4}
     .
     \label{EQ50}
   \end{align}
Combining \eqref{EQ55}, \eqref{EQ58}, \eqref{EQ50}, and integrating in
time leads to
   \begin{align}
       \Vert Z^\alpha u(t)\Vert_{L^2}^2
       \lec
       \Vert Z^\alpha u_0\Vert_{L^2}^2
       +\int_0^t
       \Bigl(\Vert u(s)\Vert_{4}^2 
       (\Vert u(s)\Vert_{W^{1,\infty}}+
       \Vert u(s)\Vert_{2,\infty}+1
       ) 
       + \Vert u(s)\Vert_{4}\Vert \nabla p(s)\Vert_{3}
       \Bigr) \,ds
       ,
       \llabel{EQ05}  
   \end{align} 
completing the induction step.
 \end{proof}
 
\subsection{Pressure Estimates}
 
Here, we present the $H^3_\cco(\Omega)$ bounds on $\nabla p$ and~$D^2 p$.
The latter estimate is needed when we bound~$\Vert u\Vert_{2,\infty}$;
see Section~\ref{sec33} below.
 
\cole
\begin{Proposition}
\label{P.Pre}
Let $u \in C([0,T];H^5(\Omega))$ be a solution of \eqref{NSE0}--\eqref{hnavierbdry}
for some $T>0$. 
Then we have the inequality
  \begin{align}
   \Vert D^2 p(t)\Vert_{3}+\Vert \nabla p(t)\Vert_{3}
   \lec 
   \Vert u(t)\Vert_{4}(\Vert u(t)\Vert_{2,\infty}+\Vert \nabla u(t)\Vert_{L^\infty}+1)
   ,
   \label{EQ.Pre}
  \end{align}
for $t\in [0,T]$. 
 \end{Proposition}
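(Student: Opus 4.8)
The plan is to realize $p$ as the solution of an elliptic boundary value problem and, throughout, to separate the derivatives controlled in conormal Sobolev norms from the single genuinely normal derivative of $u$, which is only bounded. First, taking the divergence of \eqref{NSE0}$_1$ and using $\nabla\cdot u=0$ gives $\Delta p=-\partial_i u_j\,\partial_j u_i=-\nabla\cdot(u\cdot\nabla u)$ in $\Omega$, while evaluating the third component of \eqref{NSE0}$_1$ on $\{z=0\}$ and using \eqref{hnavierbdry} (so that $u_3=\nabla_\hh u_3=\partial_t u_3=0$ there) yields the homogeneous Neumann condition $\partial_z p|_{z=0}=0$. I would then rewrite the forcing so that every normal derivative is displayed in controlled form: by incompressibility $\partial_z u_3=-\nabla_\hh\cdot u_\hh$, and each factor $u_3\,\partial_z(\cdot)$ becomes $(u_3/\varphi)\,Z_3(\cdot)$, estimated through \eqref{EQ.u3}. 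After these substitutions the only genuinely normal derivative of $u$ surviving in $\Delta p$ is $\partial_z u_\hh$, and it multiplies only $\nabla_\hh u_3$; keeping this factor in $L^\infty$ is the reason $\Vert\nabla u\Vert_{L^\infty}$ appears on the right of \eqref{EQ.Pre}. Combined with the Leibniz rule and \eqref{EQ.int} (taking the $L^\infty$ norm of the lower-order factor), this gives $\Vert u\cdot\nabla u\Vert_{3}\lec\Vert u\Vert_{4}(\Vert u\Vert_{2,\infty}+\Vert\nabla u\Vert_{L^\infty})$ and the weighted bound $\Vert\varphi^{2}\,\partial_i u_j\partial_j u_i\Vert_{3}\lec\Vert u\Vert_{4}(\Vert u\Vert_{2,\infty}+\Vert\nabla u\Vert_{L^\infty})$, the extra $\varphi^{2}$ being exactly what converts $\varphi^{2}\,\partial_z u_\hh\cdot\nabla_\hh u_3=\varphi\,(Z_3 u_\hh)\cdot\nabla_\hh u_3$ into a conormally controlled quantity.

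For the estimate of $\nabla p$ I would first take purely horizontal conormal derivatives: $Z^{\tilde\alpha}_\hh$ commutes with $\Delta$ and $\nabla$ and, being tangential, preserves the Neumann condition, so $Z^{\tilde\alpha}_\hh p$ solves $\Delta(Z^{\tilde\alpha}_\hh p)=-\nabla\cdot Z^{\tilde\alpha}_\hh(u\cdot\nabla u)$ with $\partial_z(Z^{\tilde\alpha}_\hh p)|_{z=0}=0$. Testing against $Z^{\tilde\alpha}_\hh p$ and integrating by parts, the boundary term vanishes because $(u\cdot\nabla u)_3$ and all its horizontal conormal derivatives vanish on $\{z=0\}$, so that $\Vert\nabla Z^{\tilde\alpha}_\hh p\Vert_{L^2}\lec\Vert Z^{\tilde\alpha}_\hh(u\cdot\nabla u)\Vert_{L^2}$ for $|\tilde\alpha|\le 3$; this controls every conormal norm of $\nabla p$ built from $Z_\hh$ alone. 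To incorporate the $Z_3$-derivatives I would use the equation in the form $\partial_{zz}p=-\partial_i u_j\partial_j u_i-\Delta_\hh p$ together with Lemma~\ref{L01}(i)--(ii): applying $Z^\alpha=Z^{\tilde\alpha}_\hh Z^{k}_3$ and commuting $Z_3$ past $\partial_z$ reduces $Z^\alpha\nabla p$, modulo terms with strictly fewer copies of $Z_3$ (handled by downward induction on $k$), to horizontal conormal derivatives of $\nabla p$, to horizontal conormal derivatives of $\Delta_\hh p$ — again horizontal conormal derivatives of $\nabla p$ — and to a $\varphi^{2}$-weighted copy of the forcing, bounded as above. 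The second-order bound for $D^2 p$ then follows along the same lines: the tangential and mixed components reduce to the estimate for $\nabla p$ together with the equation, while the purely normal component is read off directly from $\partial_{zz}p=-\partial_i u_j\partial_j u_i-\Delta_\hh p$ after the $\varphi$-weighting above; collecting all the bounds yields \eqref{EQ.Pre}.

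The main obstacle, and what distinguishes this from the whole-space or periodic case, is that the forcing $\Delta p=-\partial_i u_j\partial_j u_i$ is \emph{not} available in $H^3_\cco$: it contains $\partial_z u_\hh$, and since only $\varphi\,\partial_z u=Z_3 u$ — not $\partial_z u$ itself — is propagated in conormal Sobolev spaces, the standard "elliptic regularity applied to a Sobolev forcing" route is unavailable. The remedy is structural and must be assembled carefully: incompressibility to remove $\partial_z u_3$; the Hardy-type inequality \eqref{EQ.u3} to absorb the factor $u_3\,\partial_z$; the equation itself, rather than elliptic regularity, to recover the normal second derivative; and systematic tracking of the $\varphi$-weights produced by the $Z_3$--$\partial_z$ commutators of Lemma~\ref{L01}, so that $\nabla u$ enters only through its $L^\infty$ norm. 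Checking that the induction on the number of $Z_3$-derivatives closes — in particular that none of the commutator-generated weights are too singular — is the technical heart of the argument.
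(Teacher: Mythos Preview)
Your proposal has a genuine gap in the $D^2 p$ estimate, rooted in a misidentification of the main obstacle. You assert that the scalar forcing $f=\partial_i u_j\,\partial_j u_i$ is \emph{not} available in $H^3_\cco$ because it contains $\partial_z u_\hh$, and that this forces you to abandon elliptic regularity in favor of the variational estimate $\Vert\nabla Z^{\tilde\alpha}_\hh p\Vert_{L^2}\lec\Vert Z^{\tilde\alpha}_\hh(u\cdot\nabla u)\Vert_{L^2}$. This is exactly backwards. The paper's proof shows that $f$ \emph{is} in $H^3_\cco$: in the only problematic product $\partial_z u_\hh\cdot\nabla_\hh u_3$ one moves the factor $1/\varphi$ onto $\nabla_\hh u_3$ (not onto $\partial_z u_\hh$), obtaining $Z_3 u_\hh\cdot Z_\hh(u_3/\varphi)$, which is a product of two conormally controlled quantities thanks to~\eqref{EQ.u3}; see \eqref{EQ90}--\eqref{EQ91b}. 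Your $\varphi^2$-weighted bound on $f$ is thus unnecessarily weak. With the unweighted bound in hand, the paper applies the full $H^2$ elliptic estimate for the Neumann problem (citing Grisvard) to $-\Delta(Z^\alpha p)=Z^\alpha f+[Z^\alpha,\Delta]p$, obtaining $\Vert D^2 Z^\alpha p\Vert_{L^2}+\Vert\nabla Z^\alpha p\Vert_{L^2}$ simultaneously; the commutator is handled by induction on the number of $Z_3$'s.

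Your variational route cannot recover this. Testing against $Z^{\tilde\alpha}_\hh p$ gives only one derivative on $p$: from $u\cdot\nabla u\in H^3_\cco$ you get $\nabla Z^{\tilde\alpha}_\hh p\in L^2$ for $|\tilde\alpha|\le 3$, hence at most four horizontal derivatives of $p$ in $L^2$. But $\Vert Z^{\tilde\alpha}_\hh\partial_i\partial_j p\Vert_{L^2}$ for $|\tilde\alpha|=3$ and $i,j\in\{1,2\}$ requires five. Pushing the energy estimate to $|\tilde\alpha|=4$ would need $\Vert u\cdot\nabla u\Vert_{4}$, which in turn needs $\Vert u\Vert_5$ --- not available. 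Your claimed reduction ``tangential and mixed components of $D^2 p$ reduce to the $\nabla p$ estimate together with the equation'' is circular: the equation $\partial_{zz}p=-\Delta_\hh p-f$ expresses the normal second derivative in terms of the tangential ones, so you cannot bootstrap $\partial_{zz}p$ without already controlling $\Delta_\hh p$ at the same level. The fix is precisely the step you discarded: use the $H^2$ elliptic estimate with the unweighted scalar forcing in $H^3_\cco$.
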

 \colb
 
To prove Proposition~\ref{P.Pre}, we first consider $Z^\alpha= Z^{\tilde{\alpha}}_\hh $
and then separately consider the  general case
$Z^\alpha = Z^{\tilde{\alpha}}_\hh Z^k_3$
using the induction on~$k$.

\begin{proof}[Proof of Proposition~\ref{P.Pre}]

For $u$ and $T>0$ as in the statement, we claim
that
 \begin{align}
  \Vert D^2 Z^{\tilde{\alpha}}_\hh p(t)\Vert_{L^2}+
  \Vert \nabla Z^{\tilde{\alpha}}_\hh p(t)\Vert_{L^2}
  \lec 
  \Vert u(t)\Vert_{4}(\Vert u(t)\Vert_{2,\infty}+\Vert \nabla u(t)\Vert_{L^\infty}+1)
  ,
  \label{EQ.hPre}
 \end{align}
 for all $t \in [0,T]$ and $0 \le |\tilde{\alpha}|\le 3$.
We solve for $-\Delta p$ using incompressibility in \eqref{NSE0}, obtaining
  \begin{align}
  -\Delta p = \partial_i u_j \partial_j u_i
  ,\label{pre} 
  \end{align}
while on $\partial \Omega$, employing \eqref{hnavierbdry}, we get
  \begin{align}
  \nabla p \cdot n = -\partial_z p
  =0
  \label{pre.bdry}
  \end{align}
We only consider the case $|\tilde{\alpha}|=3$ as other cases follow the analogous proof.
Applying $Z^{\tilde{\alpha}}_\hh $ to \eqref{pre}--\eqref{pre.bdry}, we obtain
 \begin{align}
  -\Delta Z^{\tilde{\alpha}}_\hh p = Z^{\tilde{\alpha}}_\hh (\partial_i u_j \partial_j u_i)
  ,
  \llabel{hpre}
 \end{align}
 coupled with the Neumann boundary condition
 \begin{align}
  \nabla Z^{\tilde{\alpha}}_\hh p \cdot n
  = 0.
  \llabel{hpre.bdry}
 \end{align}
 Employing the standard elliptic theory (see, for example,~\cite{Gr}), we conclude that
 \begin{align}
  \Vert D^2 Z^{\tilde{\alpha}}_\hh p\Vert_{L^2}
  +
  \Vert \nabla Z^{\tilde{\alpha}}_\hh p\Vert_{L^2}
  \lec
  \Vert Z^{\tilde{\alpha}}_\hh (\partial_i u_j \partial_j u_i)\Vert_{L^2}
  .\label{EQ87}
 \end{align}
To establish \eqref{EQ.hPre}, we only estimate
$Z^{\tilde{\alpha}}_\hh (Z_\hh u_\hh Z_\hh u_\hh )$ 
 and 
 $Z^{\tilde{\alpha}}_\hh (Z_\hh u_3 \partial_z u_\hh)$.
Indeed,
 due to the incompressibility, we note that
 $\partial_z u_3 \partial_z u_3$ consists only of horizontal derivatives.
 When there are no normal derivatives, employing 
 \eqref{EQ.int} yields
 \begin{align}
  \Vert Z^{\tilde{\alpha}}_\hh (Z_\hh u_\hh Z_\hh u_\hh )\Vert_{L^2}
  \lec
  \Vert u\Vert_{4}\Vert u\Vert_{1,\infty}.
  \label{EQ89}
 \end{align}
 On the other hand, rewriting the term involving $\partial_z$ gives
 \begin{align}
  Z^{\tilde{\alpha}}_\hh (Z_\hh u_3 \partial_z u_\hh)=
  \sum_{0\le |\tilde{\beta}|\le |\tilde{\alpha}|}
  {\alpha \choose \beta}
  Z^{\tilde{\beta}}_\hh Z_\hh u_3
  Z^{\tilde{\alpha}-\tilde{\beta}}_\hh \partial_z u_\hh
  .\label{EQ90}
 \end{align}
 Assuming $|\tilde{\beta}|=3$, we have
 \begin{align}
  \Vert Z_\hh^{\tilde{\beta}} Z_\hh u_3 \partial_z Z_\hh^{\tilde{\alpha} - {\tilde{\beta}}} u_\hh\Vert_{L^2}\lec
  \Vert u\Vert_{4}\Vert \nabla u\Vert_{L^\infty}
  .\label{EQ91a}
 \end{align} 
 When $|\tilde{\beta}|\neq 3$,
 we conormalize $\partial_z$ upon dividing and multiplying by $\varphi$ to obtain
 \begin{equation}
  \label{EQ91b}
  \left\Vert Z_\hh^{\tilde{\beta}} Z_\hh \frac{u_3}{\varphi} 
  Z_3 Z_\hh^{\tilde{\alpha} - {\tilde{\beta}}} u_\hh\right\Vert_{L^2}
  \lec
  \begin{cases}
   \bigl\Vert Z_\hh \frac{u_3}{\varphi}\bigr\Vert_{L^\infty}\Vert Z^{\tilde{\alpha}-\tilde{\beta}}_\hh Z_3 u\Vert_{L^2}
   \lec \Vert u\Vert_{4}\Vert u\Vert_{2,\infty}, & |{\tilde{\beta}}|=0
   \\
   \left(\bigl\Vert Z_\hh \frac{u_3}{\varphi}\bigr\Vert_{L^\infty}\Vert Z_\hh Z_3 u\Vert_{2}
   +\left\Vert Z_\hh \frac{u_3}{\varphi}\right\Vert_{2} \Vert Z_\hh Z_3 u\Vert_{L^\infty}\right)
   \lec\Vert u\Vert_{4}\Vert u\Vert_{2,\infty}, & |{\tilde{\beta}}|=1
   \\
   \Vert u\Vert_{4}\Vert u\Vert_{2,\infty}, & |{\tilde{\beta}}|=2,
  \end{cases}
 \end{equation}
 where, when $|\tilde{\beta}|= 1$, we have employed \eqref{EQ.int}
 by choosing $f$ as $Z_\hh \frac{u_3}{\varphi}$ and $g$ as $Z_\hh Z_3 u$. 
 
Collecting \eqref{EQ87}--\eqref{EQ91b}
gives \eqref{EQ.hPre} for~$|\tilde{\alpha}|=3$.
Similar estimates can be used to bound lower order conormal derivatives
of $\nabla p$ and $D^2 p$, i.e., 
we may repeat \eqref{EQ87}--\eqref{EQ91b} for $|\tilde{\alpha}|\le 2$ and 
conclude that~\eqref{EQ.hPre} holds.

Next, we proceed to the case $Z^\alpha = Z^{\tilde{\alpha}}_\hh Z^k_3$ and $k \ne 0$. We only focus on conormal derivatives of order three, i.e., $|\alpha|=3$. 
To estimate $\Vert D^2 Z^\alpha p\Vert_{L^2}$ and
 $\Vert \nabla Z^\alpha p\Vert_{L^2}$, we employ
 induction on $0\le k \le 3$ such that 
 $k + |\tilde{\alpha}| = 3$. 
 Since $|\alpha|=3$ is fixed, the total number of $Z_3$ in $Z^\alpha$ increases on the inductive step,
 whereas the total number of horizontal derivatives decreases. 
 When $k=0$, \eqref{EQ.hPre} yields the base step of the induction.
 Now, the inductive hypothesis is 
 \begin{align}
   \Vert D^2 Z^{\tilde{\beta}}_\hh Z^{k}_3 p\Vert_{L^2}+
   \Vert \nabla Z^{\tilde{\beta}}_\hh Z^{k}_3 p\Vert_{L^2}
   \lec
   \Vert u\Vert_{4}(\Vert u\Vert_{2,\infty}+\Vert \nabla u\Vert_{L^\infty}+1) 
   \comma |\tilde{\beta}| + k = 3.
   \label{EQ.pre.k-1} 
 \end{align}
  Therefore, our goal is to establish that
  \begin{align}
    \Vert D^2 Z^{\tilde{\alpha}}_\hh Z^{k+1}_3 p\Vert_{L^2}+
    \Vert \nabla Z^{\tilde{\alpha}}_\hh Z^{k+1}_3 p\Vert_{L^2}
    \lec
    \Vert u\Vert_{4}(\Vert u\Vert_{2,\infty}+\Vert \nabla u\Vert_{L^\infty}+1) 
    \comma |\tilde{\alpha}| + k + 1 = 3
    .
    \label{EQ.pre.k}
  \end{align}
We apply $Z^\alpha = Z^{\tilde{\alpha}}_\hh Z^{k+1}_3$ to \eqref{pre}, obtaining
  \begin{align}
   -\Delta Z^\alpha p  
   = Z^\alpha(\partial_i u_j \partial_j u_i)
   + Z^\alpha \Delta p -\Delta Z^\alpha p
   ,
   \llabel{kpre}  
  \end{align}
with the boundary condition
 \begin{align}
  \nabla Z^\alpha p \cdot n
  = -\partial_z Z^{\tilde{\alpha}}_\hh Z^{k+1}_3 p
  = -Z^{\tilde{\alpha}}_\hh \sum_{j=0}^{k}
  \tilde{c}_{j,\varphi}^{k+1} Z_3^j \partial_z p 
  = -\tilde{c}_{0,\varphi}^{k+1} Z^{\tilde{\alpha}}_\hh \partial_z p 
  = 0
  ,\llabel{kpre.bdry}
 \end{align}
 where we have employed \eqref{pre.bdry} and $Z_3=0$ on~$\partial \Omega$.
As in \eqref{EQ87}, we use the elliptic estimates to get
 \begin{align}
  \Vert D^2 Z^\alpha p\Vert_{L^2}+
  \Vert \nabla Z^\alpha p\Vert_{L^2}
  \lec
  \Vert Z^\alpha(\partial_i u_j \partial_j u_i)\Vert_{L^2}
  +
  \Vert Z^\alpha \Delta p -\Delta Z^\alpha p\Vert_{L^2}
  .\label{EQ93}
 \end{align}
 For the quadratic term in \eqref{EQ93},
 we proceed as in \eqref{EQ89}--\eqref{EQ91b} and
employ Lemma~\ref{L01} to commute $\partial_z$ and $Z_3$, with the only difference 
being the lower order terms. Thus, we conclude
 \begin{align}
  \Vert Z^\alpha(\partial_i u_j \partial_j u_i)\Vert_{L^2}
  \lec 
  \Vert u\Vert_{4}(\Vert u\Vert_{2,\infty}+\Vert \eta\Vert_{L^\infty}+1)
  .
  \label{EQ94}
 \end{align}
 Now, we consider the commutator term for the pressure.
 Employing Lemma~\ref{L01}
 and using 
 $(\tilde{c}_{k+1,\varphi}^{k+1})' = (1)' = 0$,
 we rewrite this term as
 \begin{align}
  \begin{split}
   Z^\alpha \Delta p -\Delta Z^\alpha p
   &=Z^\alpha \partial_{zz} p - \partial_{zz}Z^\alpha p
   = Z^\alpha \partial_{zz} p -
   \sum_{j=0}^{k+1} \sum_{l=0}^{j}
   \tilde{c}^j_{l,\varphi} \tilde{c}^{k+1}_{j,\varphi} Z^l_3 Z^{\tilde{\alpha}}_\hh \partial_{zz} p 
   +
   \sum_{j=0}^{k+1}
   (\tilde{c}^{k+1}_{j,\varphi})' Z^j_3 Z^{\tilde{\alpha}}_\hh \partial_z p 
   \\&=
   -\sum_{j=0}^{k} \sum_{l=0}^{j}
   \tilde{c}^j_{l,\varphi} \tilde{c}^{k+1}_{j,\varphi} Z^l_3 Z^{\tilde{\alpha}}_\hh \partial_{zz} p 
   -
   \sum_{l=0}^{k}
   \tilde{c}^{k+1}_{l,\varphi} Z^l_3 Z^{\tilde{\alpha}}_\hh \partial_{zz} p 
   +
   \sum_{j=0}^{k}
   (\tilde{c}^{k+1}_{j,\varphi})' Z^j_3 Z^{\tilde{\alpha}}_\hh \partial_z p
   \\&= I_1 + I_2 + I_3     
   .\label{EQ95}
  \end{split}
 \end{align}
 We only focus on the highest-order terms in \eqref{EQ95}. 
 Starting with $I_3$, 
 we consider $\partial_z Z^{\tilde{\alpha}}_\hh Z^{k}_3 p$ which we estimate
 using~\eqref{EQ.pre.k-1}.
 Next, for both $I_1$ and $I_2$,
 $\tilde{c}_{k,\varphi}^{k+1} Z^{\tilde{\alpha}}_\hh Z^{k}_3 \partial_{zz} p$
 is the highest-order term. 
 We employ \eqref{pre} to expand this term as
 \begin{align}
  \tilde{c}_{k,\varphi}^{k+1} Z^{\tilde{\alpha}}_\hh Z^{k}_3 \partial_{zz} p
  =
  -\tilde{c}_{k,\varphi}^{k+1} Z^{\tilde{\alpha}}_\hh Z^{k}_3 \Delta_\hh p
  -\tilde{c}_{k,\varphi}^{k+1} Z^{\tilde{\alpha}}_\hh Z^{k}_3 (\partial_i u_j \partial_j u_i)
  .\llabel{EQ97}
 \end{align}
 Recalling that $k+|\alpha| =2$, 
 the term $Z^{\tilde{\alpha}}_\hh Z^{k}_3 (\partial_i u_j \partial_j u_i)$
 is estimated as in \eqref{EQ89}--\eqref{EQ91b}.
 In addition, 
 we use \eqref{EQ.pre.k-1} and estimate $Z^{\tilde{\alpha}}_\hh Z^{k}_3 \Delta_\hh p$ as
 \begin{align}
  \Vert Z^{\tilde{\alpha}}_\hh Z^{k}_3 \Delta_\hh p\Vert_{L^2}
  \lec
  \Vert D^2 Z^{\tilde{\alpha}}_\hh Z^{k}_3 p\Vert_{L^2}
  \lec 
  \Vert u\Vert_{4}(\Vert u\Vert_{2,\infty}+\Vert \nabla u\Vert_{L^\infty}+1)
  .\llabel{EQ98}
 \end{align}
 Utilizing these estimates for the other terms in $I_1, I_2$, and $I_3$,
 we arrive at
 \begin{align}
  \Vert Z^\alpha \Delta p -\Delta Z^\alpha p\Vert_{L^2}
  \lec \Vert u\Vert_{4}(\Vert u\Vert_{2,\infty}+\Vert \nabla u\Vert_{L^\infty}+1)
  .\label{EQ99}
 \end{align}
 Collecting \eqref{EQ93}--\eqref{EQ94}, and \eqref{EQ99},
 we conclude \eqref{EQ.pre.k}
 and the proof of Proposition~\ref{P.Pre}.
\end{proof}

\subsection{Uniform bounds}
\label{sec33}
Now, we establish bounds for $\Vert \nabla u\Vert_{L^\infty}$ and~$\Vert u\Vert_{2,\infty}$.

\cole
\begin{Proposition}
 \label{P.Inf}
 Let $u \in C([0,T];H^5(\Omega))$ be a solution of \eqref{NSE0}--\eqref{hnavierbdry}
 for some $T>0$.
 Then we have the inequality
  \begin{align}
    \Vert u(t)\Vert_{2,\infty}^2+\Vert \nabla u(t)\Vert_{L^\infty}^2
    \lec 
    \Vert u_0\Vert_{2,\infty}^2+\Vert \nabla u_0\Vert_{L^\infty}^2
    +
    \int_0^t
    \Bigl(
     (\Vert u\Vert_{2,\infty}+\Vert \nabla u\Vert_{L^\infty}+1)^3
      +\Vert u\Vert_{2,\infty}\Vert D^2 p\Vert_{3}
    \Bigr)
    \,ds
    ,
    \label{EQ.Inf1}
  \end{align}
  for $t \in [0,T]$.
  \end{Proposition}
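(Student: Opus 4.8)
Here, following the strategy for $\Vert\nabla u\Vert_{L^\infty}$ and $\Vert u\Vert_{2,\infty}$ outlined in the introduction, the plan is to propagate both quantities along the flow of $u$, treating the full gradient (a non‑conormal object) separately from the conormal derivatives. For $\Vert\nabla u\Vert_{L^\infty}$, I would first trade $\nabla u$ for the tangential vorticity. By incompressibility $\partial_z u_3=-\nabla_\hh\cdot u_\hh$, so $\Vert\partial_z u_3\Vert_{L^\infty}\lec\Vert u\Vert_{1,\infty}$; and from the definition of the curl, $\partial_z u_1=\omega_2+\partial_1 u_3$ and $\partial_z u_2=-\omega_1+\partial_2 u_3$, so that $\Vert\partial_z u_\hh\Vert_{L^\infty}\lec\Vert u\Vert_{1,\infty}+\Vert(\omega_1,\omega_2)\Vert_{L^\infty}$. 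Since every other first derivative of $u$ is tangential, this gives $\Vert\nabla u\Vert_{L^\infty}\lec\Vert u\Vert_{1,\infty}+\Vert(\omega_1,\omega_2)\Vert_{L^\infty}$ and reduces matters to the tangential vorticity.

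The tangential vorticity obeys the pressure‑free equation $\partial_t\omega+u\cdot\nabla\omega=\omega\cdot\nabla u$. Integrating $|\omega_\hh|^2$ along a characteristic of $u$, the transport term drops out and
\[
\frac{d}{dt}\Vert(\omega_1,\omega_2)\Vert_{L^\infty}^2\lec\Vert(\omega_1,\omega_2)\Vert_{L^\infty}\Vert(\omega\cdot\nabla u)_\hh\Vert_{L^\infty}.
\]
Writing $(\omega\cdot\nabla u)_i=\omega_\hh\cdot\nabla_\hh u_i+\omega_3\partial_z u_i$ for $i=1,2$, using that $\omega_3=\partial_1 u_2-\partial_2 u_1$ is tangential, and re‑inserting the reduction above, $\Vert(\omega\cdot\nabla u)_\hh\Vert_{L^\infty}\lec(\Vert u\Vert_{1,\infty}+\Vert(\omega_1,\omega_2)\Vert_{L^\infty})^2\lec(\Vert u\Vert_{2,\infty}+\Vert\nabla u\Vert_{L^\infty})^2$, which produces the cubic contribution in \eqref{EQ.Inf1} once squared. (This is also why $\nabla u_0\in L^\infty$ may be replaced by $(\omega_1,\omega_2)|_{t=0}\in L^\infty$, as noted in the introduction.)

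For $\Vert u\Vert_{2,\infty}$, I would apply $Z^\alpha$ with $|\alpha|\le 2$ to \eqref{NSE0}, obtaining $\partial_t Z^\alpha u+u\cdot\nabla Z^\alpha u=-Z^\alpha\nabla p+\bigl(u\cdot\nabla Z^\alpha u-Z^\alpha(u\cdot\nabla u)\bigr)$, so that along the flow
\[
\frac{d}{dt}\Vert Z^\alpha u\Vert_{L^\infty}^2\lec\Vert Z^\alpha u\Vert_{L^\infty}\bigl(\Vert Z^\alpha\nabla p\Vert_{L^\infty}+\Vert u\cdot\nabla Z^\alpha u-Z^\alpha(u\cdot\nabla u)\Vert_{L^\infty}\bigr).
\]
The commutator is handled as in the proof of Proposition~\ref{P.Con}, now measuring in $L^\infty$: splitting $u\cdot\nabla=u_\hh\cdot\nabla_\hh+u_3\partial_z$, the horizontal part yields $\sum_{0<|\beta|\le|\alpha|}Z^\beta u_\hh\cdot\nabla_\hh Z^{\alpha-\beta}u$, each term $\lec\Vert u\Vert_{2,\infty}^2$; for the $u_3\partial_z$ part one uses Lemma~\ref{L01}(i) to commute $Z_3$ past $\partial_z$ and then distinguishes by how many conormal derivatives land on $u_3$: terms with all derivatives on $u_3$ are bounded by $\Vert Z^\alpha u_3\Vert_{L^\infty}\Vert\partial_z u\Vert_{L^\infty}\lec\Vert u\Vert_{2,\infty}\Vert\nabla u\Vert_{L^\infty}$, while the remaining ones are rewritten via $u_3\partial_z=\tfrac{u_3}{\varphi}Z_3$ and $Z_\hh u_3\,\partial_z=Z_\hh\tfrac{u_3}{\varphi}\,Z_3$, the factors $u_3/\varphi$ being controlled through the Hardy‑type bound \eqref{EQ.u3} (by $\Vert u\Vert_{1,\infty}$, or $\Vert u\Vert_{2,\infty}$ when one conormal derivative is present) and the operator $\varphi\partial_z$ recombining into $Z_3$; in total the commutator is $\lec\Vert u\Vert_{2,\infty}^2+\Vert u\Vert_{2,\infty}\Vert\nabla u\Vert_{L^\infty}$. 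For the pressure forcing one invokes the anisotropic embedding \eqref{EQ.emb}, $\Vert Z^\alpha\nabla p\Vert_{L^\infty}^2\lec\Vert(I-\Delta_\hh)^{s_1/2}\partial_z Z^\alpha\nabla p\Vert_{L^2}\Vert(I-\Delta_\hh)^{s_2/2}Z^\alpha\nabla p\Vert_{L^2}$, uses Lemma~\ref{L01} to commute $\partial_z$ and $Z_3$, notes that $\partial_z\nabla p$ is a component of $D^2 p$, and reduces both factors to the $H^3_\cco$ pressure norms of Proposition~\ref{P.Pre}; a Young inequality then contributes exactly the mixed term $\Vert u\Vert_{2,\infty}\Vert D^2 p\Vert_{3}$ of \eqref{EQ.Inf1}. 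Summing over $|\alpha|\le 2$ and over components, adding the vorticity estimate, and integrating in time gives \eqref{EQ.Inf1}.

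I expect the pressure forcing in the conormal estimate to be the main difficulty. Unlike $\Vert\nabla u\Vert_{L^\infty}$, whose evolution is pressure‑free, the top conormal derivatives $Z^\alpha u$ with $|\alpha|=2$ are transported with the genuinely top‑order source $Z^\alpha\nabla p$, which must be bounded in $L^\infty$ with no differentiability or integrability gain on $\nabla p$ beyond the $H^3_\cco$ bounds of Proposition~\ref{P.Pre}. This forces a careful accounting, in \eqref{EQ.emb}, of which derivatives are tangential and which are normal, of the lower‑order commutators generated by Lemma~\ref{L01}, and of the fact that $\partial_z\nabla p$ enters through the stronger bound $\Vert D^2 p\Vert_{3}$ rather than only $\Vert\nabla p\Vert_{3}$; it is precisely this interplay that explains why \eqref{EQ.Inf1} does not close by itself but retains the coupling term $\Vert u\Vert_{2,\infty}\Vert D^2 p\Vert_{3}$, to be absorbed afterwards via Propositions~\ref{P.Pre} and~\ref{P.Con} in the Gr\"onwall argument of Proposition~\ref{P.Ap}.
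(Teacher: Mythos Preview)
Your proposal is correct and follows essentially the same route as the paper's proof: vorticity transport for $\Vert\nabla u\Vert_{L^\infty}$, transport of $Z^\alpha u$ with the same commutator expansion for $\Vert u\Vert_{2,\infty}$, and the anisotropic embedding \eqref{EQ.emb} to reduce $\Vert Z^\alpha\nabla p\Vert_{L^\infty}$ to $\Vert D^2 p\Vert_{3}$. The only cosmetic differences are that the paper works with the full vorticity $\omega$ rather than only $(\omega_1,\omega_2)$, and derives the $L^\infty$ transport bound by testing against $Z^\alpha u\,|Z^\alpha u|^{p-2}$ and letting $p\to\infty$ instead of arguing directly along characteristics.
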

\colb

\begin{proof}[Proof of Proposition~\ref{P.Inf}]
 
 To estimate the Lipschitz norm of $u$, we use the vorticity formulation 
 \begin{align}
   \omega_t + u\cdot \nabla \omega = \omega \cdot \nabla u
   ,
   \label{vorticity}
 \end{align}
as
 \begin{align}
   \Vert \nabla u\Vert_{L^\infty}
    \lec
     \Vert \omega\Vert_{L^\infty}+\Vert u\Vert_{1,\infty}
     ,\label{EQ300}
 \end{align}
 showing that the normal derivative of $u$ is
 controlled by the vorticity and the conormal derivatives of~$u$.
 Now, using the maximum principle for \eqref{vorticity}, it follows that
 \begin{align}
  \frac{d}{dt}\Vert \omega\Vert_{L^\infty}
  \lec
  \Vert \omega\Vert_{L^\infty}\Vert \nabla u\Vert_{L^\infty}
  \lec
  \Vert \omega\Vert_{L^\infty}
   (\Vert \omega\Vert_{L^\infty}+\Vert u\Vert_{1,\infty})
  ,
  \label{EQ101}
 \end{align}
 where we have employed \eqref{EQ300}.
 To estimate $\Vert u\Vert_{2,\infty}$,
 we solely focus on $Z^\alpha u$ for $|\alpha|=2$ and note in passing that
 the case $|\alpha|=1$ can be treated in a similar fashion.
 Recalling that 
 $Z^\alpha = Z^{\tilde{\alpha}}_\hh Z^k_3$ and $|\tilde{\alpha}| + k =2$,
 it follows that $Z^\alpha u$ is a solution of
 \begin{align}
  Z^\alpha u_t
  + u\cdot \nabla Z^\alpha u
  = 
  u\cdot \nabla Z^\alpha u - Z^\alpha(u\cdot \nabla u) 
  - Z^\alpha \nabla p
  .
  \label{EQ102}
 \end{align}
Next, with $p>4$, 
test \eqref{EQ102} with 
$Z^\alpha u |Z^\alpha u|^{p-2}$ and note that
the left-hand side of \eqref{EQ102} leads to the term
$   \frac{1}{p}\frac{d}{dt}\Vert Z^\alpha u\Vert_{L^p}^p$.
Proceeding with the right-hand side,
it follows that
 \begin{align}
  (u\cdot \nabla Z^\alpha u - Z^\alpha(u\cdot \nabla u) 
  - Z^\alpha \nabla p,
  Z^\alpha u |Z^\alpha u|^{p-2})
  \lec 
  \Vert u\cdot \nabla Z^\alpha u - Z^\alpha(u\cdot \nabla u) 
  - Z^\alpha \nabla p\Vert_{L^p}
  \Vert Z^\alpha u\Vert_{L^p}^{p-1}
  ,
  \llabel{EQ104}
 \end{align}
and thus we conclude that
 \begin{align}
   \frac{1}{p}\frac{d}{dt}\Vert Z^\alpha u\Vert_{L^p}^p
   \lec
   \Vert Z^\alpha u\Vert_{L^p}^{p-1}
   (\Vert u\cdot \nabla Z^\alpha u - Z^\alpha(u\cdot \nabla u) 
   - Z^\alpha \nabla p\Vert_{L^p})
   .\llabel{EQ113}
  \end{align}
 Now, we divide both sides by $\Vert Z^\alpha u\Vert_{L^p}^{p-2}$
 and pass to the limit as~$p\to \infty$. Finally, integrating in time yields
 \begin{align}
  \begin{split}
   \Vert Z^\alpha u(t)\Vert_{L^\infty}^2
   \lec&
   \Vert u_0\Vert_{2,\infty}+\int_0^t 
   \Vert u\Vert_{2,\infty}
   (\Vert u\cdot \nabla Z^\alpha u - Z^\alpha(u\cdot \nabla u)\Vert_{L^\infty}
   +\Vert Z^\alpha \nabla p\Vert_{L^\infty})
   \,ds         
   .\label{EQ114}
  \end{split}
 \end{align}
It remains to estimate the right-hand side of \eqref{EQ114}.
We begin with the commutator term and write
 \begin{align}
   \begin{split}
  u\cdot \nabla Z^\alpha u - Z^\alpha(u\cdot \nabla u)
  &= -(1-\delta_{k0})\sum_{j=0}^{k-1}
  \tilde{c}^k_{j,\varphi} \frac{u_3}{\varphi} Z_3 Z^{\tilde{\alpha}}_\hh Z^j_3 u  
  -\sum_{1\le |\beta|\le |\alpha|}
  {\alpha \choose \beta}
  Z^\beta u \cdot Z^{\alpha-\beta} \nabla u
  \\&
  =J_1 + J_2
  \end{split}
  .\label{EQ116}     
 \end{align}
The term $J_1$ is estimated as
 \begin{align}
  \Vert J_1\Vert_{L^\infty} \lec \Vert u\Vert_{1,\infty}\Vert u\Vert_{2,\infty}
  ,\label{EQ117} 
 \end{align}
while the term $J_2$ is treated as
 \begin{align}
  J_2 =
  \sum_{1\le |\tilde{\beta}| \le |\alpha|}
  {\alpha \choose \beta} 
  (Z^{\beta} u_\hh 
  \cdot 
  \nabla_\hh Z^{\alpha-\beta} u
  + 
  Z^{\beta} u_3 
  Z^{\alpha-\beta} \partial_z u)
  = J_{21} + J_{22}
  \lec \Vert u\Vert_{1,\infty}\Vert u\Vert_{2,\infty} + J_{22}.
  \label{EQ118}
 \end{align}
To bound $J_{22}$, we 
commute $\partial_z$ and $Z_3$ when necessary and write
  \begin{equation}
   \label{EQ119}
   \Vert Z^{\beta} u_3  Z^{\alpha - \beta} \partial_z u\Vert_{L^\infty}
   \lec
   \begin{cases}
     \left\Vert Z^{\beta} \frac{u_3}{\varphi}\right\Vert_{\infty} \Vert Z^{\alpha - \beta} Z_3 u\Vert_{L^\infty}
     \lec \Vert u\Vert_{2,\infty}^2, &|\beta|=1
      \\
     \Vert Z^{\beta} u_3\Vert_{L^\infty}\Vert Z^{\alpha - \beta} \partial_z u\Vert_{L^\infty}
     \lec \Vert u\Vert_{2,\infty}(\Vert \omega\Vert_{L^\infty}+\Vert u\Vert_{1,\infty})
     , &|\beta|=2
     .
   \end{cases}
 \end{equation}
  Combining \eqref{EQ116}--\eqref{EQ119}
 and integrating in time implies
 \begin{align}
  \int_0^t \Vert u\cdot \nabla Z^\alpha u - Z^\alpha(u\cdot \nabla u)\Vert_{L^\infty} 
  \Vert u\Vert_{2,\infty}\,ds
  \lec \int_0^t \Vert u\Vert_{2,\infty}^2(\Vert \omega\Vert_{L^\infty}+\Vert u\Vert_{2,\infty})\,ds
  .\label{EQ120}
 \end{align}
 Finally, we consider the pressure term on the right-hand side of \eqref{EQ114}.
 Employing \eqref{EQ.emb} yields
 \begin{align}
  \Vert \nabla p\Vert_{2,\infty}
  \lec
  \Vert \partial_z p\Vert_{2,\infty}+\Vert p\Vert_{3,\infty}
  \lec
  \Vert D^2 p\Vert_{3}^\frac{1}{2}\Vert \nabla p\Vert_{4}^\frac{1}{2}+\Vert D^2 p\Vert_{3}
  \lec
  \Vert D^2 p\Vert_{3}
  .\label{EQ121}
 \end{align}
 
 Now, we multiply \eqref{EQ101} by $\Vert \omega\Vert_{L^\infty}$ and integrate on $[0,T]$.
 Next, we sum the resulting inequality with \eqref{EQ114} and use \eqref{EQ120},~\eqref{EQ121}.
 It follows that
 \begin{align}
   \Vert u(t)\Vert_{2,\infty}^2+\Vert \omega(t)\Vert_{L^\infty}^2
   \lec 
   \Vert u_0\Vert_{2,\infty}^2+\Vert \omega_0\Vert_{L^\infty}^2
   + \int_0^t (\Vert u\Vert_{2,\infty}+\Vert \omega \Vert_{L^\infty}+1)^3
   +\Vert u\Vert_{2,\infty}\Vert D^2 p\Vert_{3}\,ds
   .\llabel{EQ302}
 \end{align}
 Using  $\Vert \omega\Vert_{L^\infty} \le \Vert \nabla u\Vert_{L^\infty}$ and \eqref{EQ300},
 we then obtain~\eqref{EQ.Inf1}.
\end{proof}

\subsection{Conclusion of the a~priori estimates}

Given $u_0$ as in Proposition~\ref{P.Ap}, denote by $u\in C([0,T];H^5(\Omega))$
a solution to~\eqref{NSE0}--\eqref{hnavierbdry} on $[0,T]$ for $T>0$.
Then, collecting \eqref{EQ.Con}, \eqref{EQ.Inf1}, and \eqref{EQ.Pre} yields
\begin{align}
  N^2(t)=
  (\Vert u(t)\Vert_{4}
  +\Vert u(t)\Vert_{2,\infty}
  +\Vert \nabla u(t)\Vert_{L^\infty})^2   
  \le C \left(N^2(0) + \int_0^t N^3(s)\,ds\right),
  \llabel{ap3}
\end{align}
for $t \in [0,T]$.
By Gr\"onwall's inequality, there exist $T_0 > 0$ and $M_0>0$ depending only on 
the norms of the initial data such that \eqref{ap} holds, concluding the proof of Proposition~\ref{P.Ap}.

\startnewsection{Proof of Theorem~\ref{T04}}{sec.main}

Let $u_0$ be as in Theorem~\ref{T04} and $\{u_0^r\}_{r>0} \in C^\infty(\Omega)$ a sequence of divergence-free 
smooth functions that are tangential on the boundary. In particular, $u_0^r \in H^5(\Omega)$ for all $r$, and
\begin{align}
  \begin{split}
    u_0^r &\to u_0 \text{ strongly in } H^4_\cco(\Omega),
    \\
    u_0^r &\rightharpoonup u_0 \text{ weakly-* in } W^{1,\infty}(\Omega)\cap W^{2,\infty}_\cco(\Omega),
    \llabel{EQ306}  
  \end{split}
\end{align}
as $r \to 0$.
Now, for a fixed $r>0$, there exists a unique solution $u^r \in C([0,T^r_{\text{max}});H^5(\Omega))$
of \eqref{NSE0}--\eqref{hnavierbdry} emanating from $u_0^r$, where $T^r_{\text{max}}$
denotes the maximal time of existence.
Then, recalling that $T_0$ is as in Proposition~\ref{P.Ap}, we have $T_0 \le T^r_{\text{max}}$.
Indeed, by the a~priori estimates
\eqref{ap} we have a uniform control of the Lipschitz norm of $u$ on the time interval
$[0,T_0]$. Therefore, $u^r \in C([0,T_0];H^5(\Omega))$, as a solution of the Euler equations,
 can be continued in the same class implying $T_0 \le T^r_{\text{max}}$.
 It follows that the sequence of approximate solutions 
 $u^r \in L^\infty(0,T_0;H^4_\cco \cap W^{1,\infty}\cap W^{2,\infty}_\cco)$
 are bounded uniformly in~$r$.
 
 Before passing to the limit, we now show that the sequence of 
 approximate solutions is Cauchy in $L^\infty(0,T_0;L^2(\Omega))$.
 To establish this, let $r_1, r_2 \in (0,1)$, and denote by $(u^1,p^1)$
 and $(u^2,p^2)$ two solutions to \eqref{NSE0}--\eqref{hnavierbdry}
 emanating from $u_0^{r_1}$ and $u_0^{r_2}$, respectively.
 Then, the difference of solutions 
 $(U,P)= (u_1-u_2,p_1-p_2)$ satisfies
 \begin{align}
  U_t 
  +U \cdot \nabla u^1 + u^2 \cdot \nabla U + \nabla P 
  =0, \text{ and } \div U =0,
  \llabel{diff}
 \end{align} 
 with the boundary conditions
 \begin{align}
  U_3 = 0 
  \comma (x,t) \in \{z=0\}\times (0,T_0)
  .\llabel{b.diff}
 \end{align}
 The usual $L^2$ estimates imply
 \begin{align}
   \frac{1}{2}\frac{d}{dt}\Vert U\Vert_{L^2}^2
   =
   - \int U \cdot \nabla u^1 U \,dx
   ,
   \llabel{EQ149}
 \end{align}
and recalling \eqref{ap}, we obtain
 \begin{align}
  \frac{d}{dt}\Vert U\Vert_{L^2}^2
  \lec
  \Vert U\Vert_{L^2}^2\Vert \nabla u^1\Vert_{L^\infty}
  \lec
  \Vert U\Vert_{L^2}^2
  ,\llabel{EQ151}
 \end{align}
 allowing the implicit constant to depend on~$M_0$.
 Therefore, employing the Gr\"onwall's inequality on $(0,T_0)$,
 it follows that 
 \begin{align}
  \sup_{[0,T]}\Vert U\Vert_{L^2}^2 \lec \Vert u_0^{r_1}-u_0^{r_2}\Vert_{L^2}^2
  ,\llabel{EQ154}
 \end{align}
 showing that $u^r \in L^\infty(0,T_0;L^2(\Omega))$ is a Cauchy sequence.
 Upon passing to a subsequence, we may pass to the limit in \eqref{NSE0}, concluding that there exists a solution 
 $u \in L^\infty(0,T_0;H^4_\cco\cap W^{1,\infty} \cap W^{2,\infty}_\cco)$ for \eqref{NSE0} such that
\begin{align}
  \begin{split}
    u^r &\to u \text{ strongly in } L^\infty(0,T_0;L^2(\Omega)),
    \\
    u^r &\rightharpoonup u \text{ weakly-* in } L^\infty(0,T_0;(H^4_\cco(\Omega)\cap W^{1,\infty}(\Omega)\cap W^{2,\infty}_\cco(\Omega))).
    \llabel{EQ307}  
  \end{split}
\end{align}
 Due to the Lipschitz regularity, $u$ is a unique solution, and it is continuous-in-time
 recalling that $C([0,T_0];H^3_\cco) \subseteq H^1(0,T_0;H^3_\cco)$. Indeed, applying 
 three conormal derivatives to \eqref{NSE0}, we may conormalize 
 the normal derivatives in $Z^3 (u\cdot \nabla u)$ and
 use \eqref{ap} and \eqref{EQ.Pre} to conclude that $u_t \in L^2(0,T_0;H^3_\cco)$.
 Finally, we pass to the limit in \eqref{hnavierbdry} using that
 \begin{align}
   \Vert u-u^r\Vert_{L^2(\partial \Omega)}\lec
    \Vert \nabla (u-u^r)\Vert_{L^\infty}^\frac{1}{5}\Vert u-u^r\Vert_{L^2}^\frac{4}{5}
     +\Vert u-u^r\Vert_{L^2}
     ,\llabel{EQ308}
 \end{align}
concluding the proof of Theorem~\ref{T04}.

\colb
\section*{Data availability statement}
The paper has no associated data.

\colb
\section*{Acknowledgments}
The authors were supported in part by the
NSF grant DMS-2205493.

\end{document}